\edef\tmp{\the\baselineskip}
\DeclareMathAlphabet{\mathpzc}{OT1}{pzc}{m}{it}
\definecolor{Gray}{gray}{0.95}
\definecolor{Gray2}{gray}{0.99}
\newcommand{\mesh}{\mathcal{M}}
\newcommand{\mup}{\boldsymbol{\mu}}
\newcommand{\vb}{\textbf{v}}
\newcommand{\x}{\mathbf{x}}
\newcommand{\y}{\mathbf{y}}
\newcommand{\s}{\mathbf{s}}
\newcommand{\geodistance}{\mathpzc{d}}
\newcommand{\dist}{\textnormal{dist}}
\newcommand{\proj}{\Pi}
\newcommand{\fomdim}{N_{h}}
\newcommand{\weight}{\mathbf{W}}
\newcommand{\bias}{\mathbf{b}}
\newcommand{\diam}{\textnormal{diam}}
\newcommand{\meshtomesh}[2]{\triplet{V_{#2}}{\cong}{\mathbb{R}^{#1}}}
\newcommand{\triplet}[3]{\begin{tabular}{l}\\\\$#1$  \\$#2$ \\$#3$\end{tabular}}
\newcommand{\hcoarse}{4h}
\newcommand{\hintermediate}{2h}
\newcommand{\hfine}{h}
\newcommand{\newstuff}[1]{#1}
\newcommand{\acapo}{\newline\indent}
\begin{document}


\title{\newstuff{Mesh-Informed Neural Networks for Operator Learning in Finite Element Spaces}}


\author{Nicola Rares Franco\and Andrea Manzoni\and Paolo Zunino}


\institute{Nicola Rares Franco \at
              MOX, Department of Mathematics, Politecnico di Milano, Italy \\
              \email{nicolarares.franco@polimi.it}
           \and
           Andrea Manzoni \at
              MOX, Department of Mathematics, Politecnico di Milano, Italy \\
              \email{andrea1.manzoni@polimi.it}
           \and
           Paolo Zunino \at
              MOX, Department of Mathematics, Politecnico di Milano, Italy \\
              \email{paolo.zunino@polimi.it}
}

\date{}

\maketitle

\vspace{-2em}
\begin{abstract} 
Thanks to their universal approximation properties and new efficient training strategies, Deep Neural Networks are becoming a valuable tool for the approximation of mathematical operators. In the present work, we introduce Mesh-Informed Neural Networks (MINNs), a class of architectures specifically tailored to handle mesh based functional data, and thus of particular interest for reduced order modeling of parametrized Partial Differential Equations (PDEs). The driving idea behind MINNs is to embed hidden layers into discrete functional spaces of increasing complexity, obtained through a sequence of meshes defined over the underlying spatial domain. The approach leads to a natural pruning strategy which enables the design of sparse architectures that are able to learn general nonlinear operators. We assess this strategy through an extensive set of numerical experiments, ranging from nonlocal operators to nonlinear diffusion PDEs, where MINNs are compared \newstuff{against more traditional architectures, such as} classical fully connected Deep Neural Networks\newstuff{, but also more recent ones, such as DeepONets and Fourier Neural Operators}. Our results show that MINNs can handle functional data defined on general domains of any shape, while ensuring reduced training times, lower computational costs, and better generalization capabilities, thus making MINNs very well-suited for demanding applications such as Reduced Order Modeling and Uncertainty Quantification for PDEs.

\keywords{Operator Learning \and Reduced Order Modeling \and Nonlinear PDEs}
\subclass{MSC 35J60 \and MSC 47J05 \and MSC 65N30 \and MSC 68T07}
\end{abstract}

\section{Introduction}
\newcommand{\operator}{\mathcal{G}}
\label{sec:intro}
Deep Neural Networks (DNNs) are one of the fundamental building blocks in modern Machine Learning. Originally developed to tackle classification tasks, they have become extremely popular after reporting striking achievements in fields such as computer vision \cite{vision} and language processing \cite{language}. Not only, an in-depth investigation of their approximation properties has also been carried out in the last decade \cite{brener,devore,francocnn,gribonval,ingo}.  In particular, DNNs have been recently employed for learning (nonlinear) operators in high-dimensional spaces \cite{schwabhigh,geist,kovachki,karniadakis}, because of their unique properties, such as the ability to blend theoretical and data-driven approaches. Additionally, the interest in using DNNs to learn high-dimensional operators arises from the potential repercussions that these models would have on fields such as Reduced Order Modeling.

Consider for instance a parameter dependent PDE problem, where each parameter instance $\mup$ leads to a solution $u_{\mup}$. In this framework, multi-query applications such as optimal control and statistical inference are prohibitive to implement, \newstuff{as they imply repeated queries to expensive numerical solvers.} \newstuff{Then}, learning the operator $\mup\to u_{\mup}$ \newstuff{becomes} of key interest, as it \newstuff{allows} 
one to replace numerical solvers with much cheaper surrogates. To this end, DNNs can be a valid and powerful alternative, \newstuff{as they were recently shown capable of} either comparable or superior results with respect to other state-of-the-art, e.g. \cite{stuart,fresca,fresca2}. 
\newstuff{More generally}, other works have recently exploited physics-informed machine learning for efficient reduced order modeling of parametrized PDEs \cite{ChenPINN,paris}. 
Also, DNN models have the practical advantage of being highly versatile as, differently from other techniques such as splines and wavelets, they can easily adapt to both high-dimensional inputs, as in image recognition, and outputs, as in the so-called generative models.

However, when the dimensions into play become very high, there are some practical issues that hinder the use of as-is DNN models. In fact, classical dense architectures tend to have too many degrees of freedom, \newstuff{which makes} them harder to train, computationally demanding and prone to overfitting \cite{augasta}. As a remedy, alternative architectures such as Convolutional Neural Networks (CNNs) and Graph Neural Networks (GNNs) have been employed over the years. These architectures can handle very efficiently data defined respectively over hypercubes (CNNs) or graphs (GNNs). Nevertheless, these models do not provide a complete answer, especially when the high-dimensionality arises from the discretization of a functional space such as $L^{2}(\Omega)$, where $\Omega\subset\mathbb{R}^{d}$ is some bounded domain, possibly nonconvex. In fact, CNNs cannot handle general geometries and they might become inappropriate as soon as $\Omega$ is not an hypercube, although some preliminary attempts to generalize CNN in this direction have recently appeared \cite{Gao}.
Conversely, GNNs have the benefit of considering their inputs and outputs as defined over the vertices of a graph \cite{scarselli}. This appears to be a promising feature, since a classical way to discretize spatial domains is to use meshing strategies, and meshes are ultimately graphs. However, GNNs are heavily based on the graph representation itself, and their construction does not exploit the existence of an underlying spatial domain. In particular, GNNs do not work at different  levels of \newstuff{resolution}, which would be a desirable property \newstuff{in a context in which the discretization is, ultimately, fictitious.}
\\\\
Inspired by these considerations, we introduce a novel class of sparse architectures, which we refer to as Mesh-Informed Neural Networks (MINNs), to tackle the problem of learning a (nonlinear) operator $$\newstuff{\operator:V_{1}\to V_{2},}$$
where \newstuff{$V_{1}$ and $V_{2}$} are some functional spaces, e.g. $\newstuff{V_{1}=V_{2}}\subseteq L^{2}(\Omega)$. The definition of $\operator$ may involve both local and nonlocal operations, such as derivatives and integrals, and it may as well imply the solution to a Partial Differential Equation (PDE). 

We cast the above problem in a setting that is more familiar to the Deep Learning literature by introducing a form of high-fidelity discretization. This is an approach that has become widely adopted by now, and it involves the discretization of the functional spaces along the same lines of Finite Element methods, as in \cite{stuart,fresca,kutyniok}. In short, one introduces a mesh having vertices $\{\x_{i}\}_{i=1}^{\fomdim}\subset\overline{\Omega}$, and defines $V_{h}\subset L^{2}(\Omega)$ as the subspace of piecewise-linear Lagrange polynomials, where $h>0$ is the stepsize of the mesh (the idea can be easily generalized to higher order finite elements, as we show later on). Since each $v\in V_{h}$ is uniquely identified by its nodal values, we have $V_{h}\cong\mathbb{R}^{\fomdim}$, and the original operator to be learned can be replaced by $$\operator_{h}:V_{h}\cong\mathbb{R}^{\fomdim}\to V_{h}\cong\mathbb{R}^{\fomdim}.$$
The idea is now to approximate $\operator_{h}$ by training some DNN $\Phi:\mathbb{R}^{\fomdim}\to\mathbb{R}^{\fomdim}$. As we argued previously, dense architectures are unsuited for such a purpose because of their prohibitive computational cost during training, \newstuff{which is mostly caused by: i) the computational resources required for the optimization, ii) the amount of training data needed to avoid overfitting.}

To overcome this bottleneck, we propose Mesh-Informed Neural Networks. These are ultimately based on an \textit{apriori} pruning strategy, whose purpose is to inform the model with the geometrical knowledge coming from $\Omega$. As we will demonstrate later in the paper, despite their simple implementation, MINNs show reduced training times and better generalization capabilities, making them \newstuff{a competitive alternative to other operator learning approaches, such as DeepONets \cite{karniadakis} and Fourier Neural Operators \cite{fnos}.}
Also, they allow for a novel interpretation of the so-called \textit{hidden layers}, in a way that may simplify the practical problem of designing DNN architectures. 
\\\\
The rest of the paper is devoted to the presentation of MINNs and it is organized as follows. In Section \ref{sec:minns}, we set some notation and formally introduce Mesh-Informed Neural Networks from a theoretical point of view. There, we also discuss their implementation and comment on the parallelism between MINNs and other emerging approaches such as DeepONets \cite{karniadakis} and Neural Operators \cite{kovachki}. \newstuff{We then devote Sections \ref{sec:experiments1}, \ref{sec:experiments2} and \ref{sec:experiments4} to the numerical experiments, addressing a different scientific question in each Section. More precisely: in Section \ref{sec:experiments1}, we provide empirical evidence that the pruning strategy underlying MINNs is powerful enough to resolve the issues of dense architectures; in Section \ref{sec:experiments2}, we showcase the flexibility of MINNs in handling complex nonconvex domains; finally, in Section \ref{sec:experiments4}, we compare the performances of MINNs with those of other state-of-the-art Deep Learning algorithms, namely DeepONets and Fourier Neural Operators.
Following the numerical experiments, in Section \ref{sec:uq},} we take the chance to present an application where MINNs are employed to answer a practical problem of Uncertainty Quantification related to the delivery of oxygen in biological tissues. Finally, we draw our conclusions and discuss future developments in Section \ref{sec:conclusions}.

\section{Mesh-Informed Neural Networks}
\label{sec:minns}
In the present Section we present Mesh-Informed Neural Networks, a novel class of architectures specifically built to handle \newstuff{discretized functional data defined over meshes}, and thus of particular interest \newstuff{for} PDE applications. Preliminary to that, we introduce some notation and recall some of the basic concepts behind classical DNNs.

\subsection{Notation and preliminaries}
Deep Neural Networks are a powerful class of approximators that is ultimately based on the composition of affine and nonlinear transformations. Here, we focus on DNNs having a \textit{feedforward} architecture. We report below some basic definitions.

\begin{definition}
\label{def:layer}
Let $m,n\ge1$ and $\rho:\mathbb{R}\to\mathbb{R}$. A layer with activation $\rho$ is a map $L:\mathbb{R}^{m}\to\mathbb{R}^{n}$ of the form $L(\vb)=\rho\left(\weight\vb+\bias\right)$, for some $\weight\in\mathbb{R}^{n\times m}$ and $\bias\in\mathbb{R}^{n}$.
\end{definition}

In the literature, $\weight$ and $\bias$ are usually referred to as weight and bias of the layer, respectively. Note that the definition above contains an abuse of notation, as $\rho$ is evaluated over an $n$-dimensional vector: we understand the latter operation component-wise, that is $\rho([x_{1},\ldots,x_{n}]):=[\rho(x_{1}),\ldots,\rho(x_{n})]$.

\begin{definition}
Let $m,n\ge1$. A neural network of depth $l\ge0$ is a map $\Phi:\mathbb{R}^{m}\to\mathbb{R}^{n}$ obtained via composition of $l+1$ layers, $\Phi=L_{l+1}\circ\ldots L_{1}$.
\end{definition}

The layers of a neural network do not need to share the same activation function and usually the output layer, $L_{l+1}$, does not have one. Architectures with $l=1$ are known as shallow networks, while the adjective deep is used when $l\ge2$. We also allow for the degenerate case in which the network reduces to a single layer ($l=0$). The classical pipeline for building a neural network model starts by fixing the architecture, that is the number of layers and their input-output dimensions. Then, the weights and biases of all layers are tuned accordingly to some procedure, which typically involves the optimization of a loss function computed over a given training set.

\subsection{Mesh-Informed layers}

We consider the following framework. We are given a bounded domain $\Omega\subset\mathbb{R}^{d}$, not necessarily convex, and two meshes \newstuff{(see Definition \ref{def:meshes} right below)} having respectively stepsizes $h,h'>0$ and vertices $$\{\x_{j}\}_{j=1}^{N_{h}},\;\{\x_{i}'\}_{i=1}^{N_{h'}}\subset\overline{\Omega}.$$ The two meshes can be completely different and they can be either structured or unstructured. To each mesh we associate the corresponding space of piecewise-linear Lagrange polynomials, namely $V_{h},V_{h'}\subset L^{2}(\Omega)$. Our purpose is to introduce a suitable notion of \textit{mesh-informed layer} $L: V_{h}\to V_{h'}$ that exploits the apriori existence of $\Omega$. In analogy to Definition \ref{def:layer}, $L$ should have $N_{h}$ neurons at input and $N_{h'}$ neurons at output, since $V_{h}\cong\mathbb{R}^{N_{h}}$ and $V_{h'}\cong\mathbb{R}^{N_{h'}}$. However, thinking of the state spaces as either comprised of functions or vectors is fundamentally different: while we can describe the objects in $V_{h}$ as regular, smooth or noisy, these notions have no meaning in $\mathbb{R}^{\fomdim}$, and similarly for $V_{h'}$ and $\mathbb{R}^{N_{h'}}$. Furthermore, in the case of PDE applications, we are typically not interested in all the elements of $V_{h}$ and $V_{h'}$, rather we focus on those that present spatial correlations coherent with the underlying physics. Starting from these considerations, we build a novel layer architecture that can meet our specific needs. In order to provide a rigorous definition, and directly extend the idea to higher order finite element spaces, we first introduce some preliminary notation. For the sake of simplicity, we will restrict to simplicial finite elements \cite{simplicial}.

\begin{definition}
\label{def:meshes}
Let $\Omega\subset\mathbb{R}^{d}$ be a bounded domain. Let $\mesh$ 
be a collection of $d$-simplices in $\Omega$, \newstuff{so that each $K\in\mesh$ is a closed subset of $\overline{\Omega}$}. For each \textit{element} $K\in\mesh$, define the quantities
$$h_{K}:=\diam(K),\quad\quad R_{K}:=\sup\;\left\{\diam(S)\;|\;S\text{ is a ball contained in }K\right\}.$$
We say that $\mesh$ is an admissible mesh of stepsize $h>0$ over $\Omega$ if the following conditions hold.
\begin{itemize}
    \item [1.] The elements are exhaustive, that is
    $$\dist\left(\overline{\Omega}, \bigcup_{K\in\mesh}K\right)\le h$$
    where $\dist(A,B)=\sup_{x\in A}\inf_{y\in B}|x-y|$ is the distance between $A$ and $B$.\\
    
    \item [2.] Any two distinct elements $K,K'\in\mesh$ have disjoint interiors. Also, their intersection is either empty or results in a common face of dimension $s<d$.\\

    \item [4.] The elements are non degenerate and their maximum diameter equals $h$, that is
    $$\min_{K\in\mesh}\;R_{K}>0\quad\text{and}\quad\max_{K\in\mesh}\;h_{K}=h.$$
\end{itemize}
In that case, the quantity
$$\sigma = \min_{K\in\mesh}\;\frac{h_{K}}{R_{K}}<+\infty,$$
is said to be the aspect-ratio of the mesh.
\end{definition}

\begin{definition}
\label{def:fe}
Let $\Omega\subset\mathbb{R}^{d}$ be a bounded domain and let $\mesh$ 
be a mesh of stepsize $h>0$ defined over $\Omega$. For any positive integer $q$, we write $X_{h}^{q}(\mesh)$ for the finite element space of piecewise-polynomials of degree at most $q$, that is
$$X_{h}^{q}(\mesh):=\{v\in\mathcal{C}(\overline{\Omega})\;\text{s.t.}\;v_{|K}\;\text{is a polynomial of degree at most }q\;\;\forall K\in\mesh\}.$$
Let $\fomdim=\dim(X_{h}^{q}(\mesh))$. We say that a collection of nodes $\{\x_{i}\}_{i=1}^{\fomdim}\subset\Omega$ and a sequence of functions $\{\varphi_{i}\}_{i=1}^{\fomdim}\subset X_{h}^{q}(\mesh)$ define a Lagrangian basis of $X_{h}^{q}(\mesh)$ if
$$\varphi_{j}(\x_{i})=\delta_{i,j}\quad\quad i,j=1,\dots,N_{h}.$$
We write $\proj_{h,q}(\mesh):X_{h}^{q}(\mesh)\to \mathbb{R}^{\fomdim}$ for the function-to-nodes operator,
$$\proj_{h,q}(\mesh):\;\;v \to [v(\x_{1}),\dots,v(\x_{\fomdim})],$$
whose inverse is
$$\proj_{h,q}^{-1}(\mesh):\;\;\mathbf{c} \to \sum_{i=1}^{\fomdim}c_{i}\varphi_{i}.$$
\end{definition}

We now have all we need to introduce our concept of mesh-informed layer.

\begin{definition}
\label{def:meshinformed} \textnormal{(Mesh-informed layer)}
Let $\Omega\subset\mathbb{R}^{d}$ be a bounded domain and \newstuff{$\geodistance:\Omega\times\Omega\to[0,+\infty)$ a given distance function}. Let $\mesh$ and $\mesh'$ be two meshes of stepsizes $h$ and $h'$, respectively. Let $V_{h}=X_{h}^{q}(\mesh)$ and $V_{h'}=X_{h'}^{q'}(\mesh)$ be the input and output spaces, respectively. Denote by $\{\x_{j}\}_{j=1}^{N_{h}}$ and $\{\x_{i}'\}_{i=1}^{N_{h'}}$ the nodes associated to a Lagrangian basis of $V_{h}$ and $V_{h'}$ respectively. A mesh-informed layer with activation function $\rho:\mathbb{R}\to\mathbb{R}$ and support $r>0$ is a map $L:V_{h}\to V_{h'}$ of the form
\begin{equation*}L = \proj_{h',q'}^{-1}(\mesh')\circ \tilde{L} \circ \proj_{h,q}(\mesh)\end{equation*}
where $\tilde{L}:\mathbb{R}^{\fomdim}\to\mathbb{R}^{N_{h}'}$ is a layer with activation $\rho$ whose weight matrix $\weight$ satisfies the additional sparsity constraint below,
\begin{equation*}\newstuff{\geodistance(\x_{j},\x_{i}')>r} \implies \weight_{i,j} = 0.\end{equation*}
\end{definition}

\begin{figure}
\centering
\includegraphics[width=0.75\textwidth]{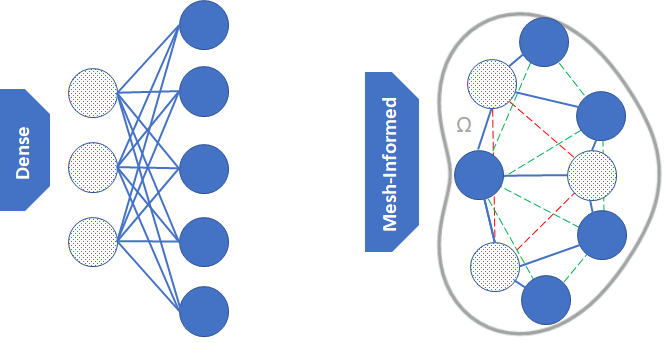}
\caption{\label{fig:meshinfo}Comparison of a dense layer (cf. Definition 1) and a mesh-informed layer (cf. Definition 5). The dense model features 3 neurons at input and 5 at output. All the neurons communicate: consequently the weight matrix of the layer has 15 active entries. In the mesh-informed counterpart, neurons become vertices of two meshes (resp. in green and red) defined over the same spatial domain $\Omega$. Only nearby neurons are allowed to communicate. This results in a sparse model with only 9 active weights.}
\end{figure}

\newstuff{The distance function $\geodistance$ in Definition \ref{def:meshinformed} can be any metric over $\Omega$. For instance, one may choose to consider the Euclidean distance, $\geodistance(\x,\x'):=|\x-\x'|$. However, if the geometry of $\Omega$ becomes particularly involved, better choices of $\geodistance$ might be available, such as the \textit{geodesic distance}. The latter quantifies the distance of two points $\x,\x'\in\Omega$ by measuring the length of the shortest path within $\Omega$ between $\x$ and $\x'$, namely
\begin{multline*}
    \geodistance(\x,\x'):=\inf\;\bigg\{\int_{0}^{1}|\gamma'(t)|dt,\;\text{with}\;  \gamma\in\mathcal{C}([0,1],\mathbb{R}^{d}),\;\gamma([0,1])\subseteq\Omega,\\\gamma(0)=\x,\;\gamma(1)=\x'\bigg\}.\;\;\;
\end{multline*}
}

It is worth pointing out that, as a matter of fact, the projections $\proj_{h,q}(\mesh)$ and $\proj_{h',q'}^{-1}(\mesh')$ have the sole purpose of making Definition \ref{def:meshinformed} rigorous. What actually defines the mesh-informed layer $L$ are the sparsity patterns imposed to $\tilde{L}$.
In fact, the idea is that these constraints should help the layer in producing outputs that are more coherent with the underlying spatial domain (cf. Figure \ref{fig:meshinfo}). In light of this intrinsic duality between $L$ and $\tilde{L}$, we will refer to the weights and biases of $L$ as to those that are formally defined in $\tilde{L}$. Also, for better readability, from now on we will use the notation \begin{equation*}L: V_{h}\xrightarrow[]{\;r\;} V_{h'},\end{equation*} to emphasize that $L$ is a mesh-informed layer with support $r$. We note that dense layers can be recovered by letting \newstuff{$r\ge\sup\;\{\geodistance(\x,\x')\;|\;\x,\x'\in\Omega\}$}, while lighter architecture are obtained for smaller values of $r$. The following result provides an explicit upper bound on the number of nonzero entries in a mesh-informed layer. \newstuff{For the sake of simplicity, we restrict to the case in which $\geodistance$ is the Euclidean distance.}

\begin{theorem}\label{theorem:sparsity}
\textit{Let $\Omega\subset\mathbb{R}^{d}$ be a bounded domain \newstuff{and $\geodistance$ the Euclidean distance}. Let $\mesh$ and $\mesh'$ be two meshes having respectively stepsizes $h,h'$ and aspect-ratios $\sigma,\sigma'$. Let \begin{equation*}h_{\min}:=\min_{K\in\mesh}h_{K},\quad h'_{\min}:=\min_{K'\in\mesh'}h_{K'}\end{equation*} be the smallest diameters within the two meshes respectively. Let $L: V_{h}\xrightarrow[]{\;r\;} V_{h'}$ be a mesh-informed layer of support $r>0$, where $V_{h}:=X_{h}^{q}(\mesh)$ and $V_{h'}:=X_{h'}^{q'}(\mesh')$. Then,
\begin{equation*}\|\mathbf{W}\|_{0} \le C\left(\sigma\sigma'\frac{r}{h_{\min}h'_{\min}}\right)^{d}\end{equation*}
where $\|\mathbf{W}\|_{0}$ is the number of nonzero entries in the weight matrix of the layer $L$, while $C=C(\Omega,d,q,q')>0$ is a constant depending only on $\Omega$, $d$, $q$ and $q'$.}
\end{theorem}
\begin{proof}
Let $N_{h}:=\text{dim}(V_{h})$, $N_{h'}:=\text{dim}(V_{h'})$ and let $\{\x_{j}\}_{j=1}^{N_{h}},\{\x_{i}'\}_{i=1}^{N_{h'}}$ be the Lagrangian nodes in the two meshes respectively. Let $\omega:=|B(\mathbf{0},1)|$ be the volume of the unit ball in $\mathbb{R}^{d}$. Since $\min_{K'}R_{K'}\ge h'_{\min}/\sigma'$, the volume of an element in the output mesh is at least
$$v_{\min}(h'):= (h'_{\min}/\sigma')^{d}\omega.$$
It follows that, for any $\x\in\Omega$, the ball $B(\x,r)$ can contain at most 
$$n_{\text{e}}(r,h'):=\frac{\omega r^{d}}{v_{\min}(h')}=\left(\frac{\sigma' r}{h'_{\min}}\right)^{d}$$
elements of the output mesh. Therefore, the number of indices $i$ such that $|\x_{i}'-\x_{j}|\le r$ is at most $n_{\text{e}}(r,h')c(d,q')$, where $c(d,q'):=(d+q')!/(q'!d!)$ bounds the number of degrees of freedom within each element. Finally,
\begin{multline*}
\|\weight\|_{0}\le N_{h}n_{\text{e}}(r,h')c(d,q')\le \\ \le \frac{c(d,q)|\Omega|}{v_{\min}(h)}n_{\text{e}}(r,h')c(d,q')=\frac{c(d,q)c(d,q')|\Omega|}{\omega}\cdot\left(\frac{\sigma\sigma' r}{h_{\min}h'_{\min}}\right)^{d}    
\end{multline*}
\end{proof}

Starting from here, we define MINNs  by composition, with a possible interchange of dense and mesh-informed layers. Consider for instance the case in which we want to define a Mesh-Informed Neural Network $\Phi: \mathbb{R}^{p}\to V_{h}\cong\mathbb{R}^{\fomdim}$ that maps a low-dimensional input, say $p\ll N_{h}$, to some functional output. Then, using our notation, one possible architecture could be\vspace{-2em}
\begin{equation}
    \label{eq:arch}
    \Phi:\mathbb{R}^{p}\xrightarrow[]{\;\;\;\;\;\;\;\;\;}
    \meshtomesh{N_{\hcoarse}}{\hcoarse}
    \xrightarrow[]{r\;=\;0.5\;}
    \meshtomesh{N_{\hintermediate}}{\hintermediate}
    \xrightarrow[]{r\;=\;0.25}
    \meshtomesh{N_{\hfine}}{\hfine},
\end{equation}
The above scheme defines a DNN of depth $l=2$, as it is composed of $3$ layers. The first layer is dense (Definition \ref{def:layer}) and has the purpose of preprocessing the input while mapping the data onto a coarse mesh (stepsize $4h$). Then, the remaining two layers perform local transformations in order to return the desired output. Note that the three meshes need not to satisfy any hierarchy, see e.g. Figure \ref{fig:meshes}. Also, the corresponding finite element spaces need not to share the same polynomial degree. Clearly, \eqref{eq:arch} can be generalized by employing any number of layers, as well as any sequence of stepsizes $h_{1},\dots,h_{n}$ and supports $r_{1},\dots,r_{n-1}$. \newstuff{Similarly, as we shall demonstrate in our experiments, one may also modify \eqref{eq:arch} to handle functional inputs, e.g. by introducing a mesh-informed layer at the beginning of the architecture.} The choice of the hyperparameters remains problem specific, but a good rule of thumb is to decrease the supports as the mesh becomes finer, so that the network complexity is kept under control (cf. Theorem \ref{theorem:sparsity}).
\begin{figure}
    \centering
    \includegraphics[width = \textwidth]{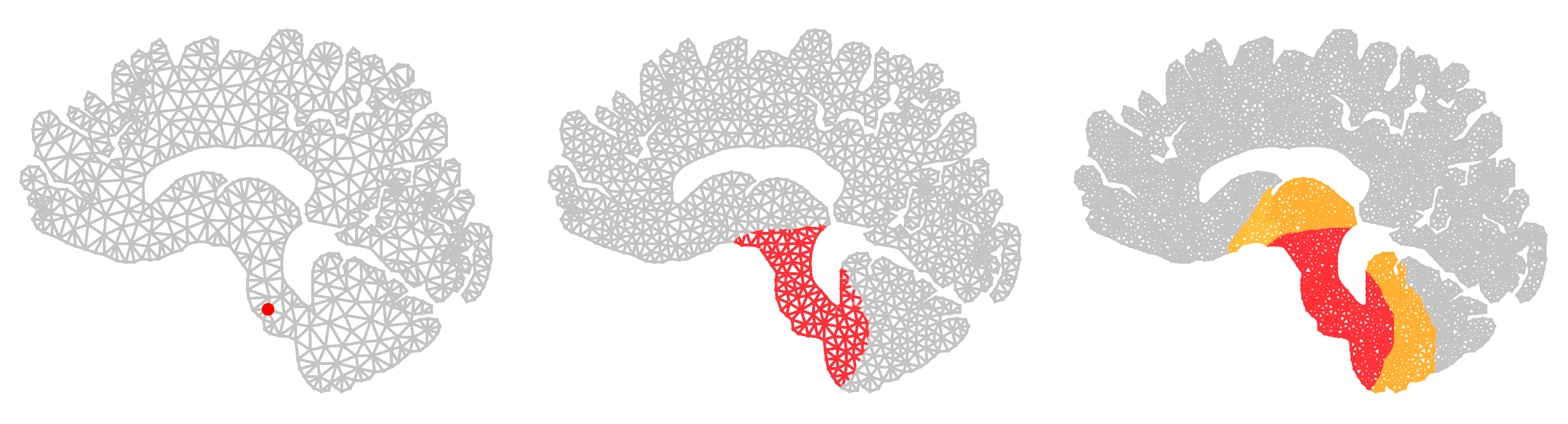}
    \caption{\newstuff{MINNs operate at different resolution levels to enforce local properties. Here, three meshes of a vertical brain slice, represent three different hidden states in the pipeline of a suitable MINN architecture, where neurons are identified with mesh vertices. Due to the sparsity constraint (Definition \ref{def:meshinformed}), the red neuron only fires information to those in the highlighted region (second mesh); in turn, these can only communicate with the neurons in the third mesh that are sufficiently close (orange region). In this way, despite relying on local operations, MINNs can eventually spread information all over the domain by exploiting the composition of multiple layers.}}
    \label{fig:meshes}
\end{figure}


\subsection{Implementation  details}\label{subsec:implementation}
\newstuff{For simplicity, let us first focus on the case in which distances are evaluated according to the Euclidean metric, $\geodistance(\x,\x')=|\x-\x'|.$
In this case, the practical implementation of mesh-informed layers is straightforward and can be done as follows, cf. Figure \ref{fig:master}.} Given $\Omega\subset\mathbb{R}^{d}$, $h,h'>0$, let $\mathbf{X}\in\mathbb{R}^{N_{h}\times d}$ and $\mathbf{X}'\in\mathbb{R}^{N_{h'}\times d}$ be the matrices containing the degrees of freedom associated to the chosen finite element spaces, that is $\mathbf{X}_{j,.}:=[X_{j,1},\dots,X_{j,d}]$ are the coordinates of the $j$th node in the input mesh, and similarly for $\mathbf{X}'$. In order to build a mesh-informed layer of support $r>0$, we first compute all the pairwise distances $D_{i,j}:=|\mathbf{X}_{j,.}-\mathbf{X}'_{i,.}|^{2}$ among the nodes in the two meshes. This can be done efficiently using tensor algebra, e.g.
\begin{equation*}D = \sum_{l=1}^{d}(e_{N_{h'}}\otimes\mathbf{X}_{.,l}-\mathbf{X}'_{.,l}\otimes e_{N_{h}})^{\circ 2}\end{equation*}
where $\mathbf{X}_{.,l}$ is the $l$th column of $\mathbf{X}$, $e_{k}:=[1,\dots,1]\in\mathbb{R}^{k}$, $\otimes$ is the tensor product and $\circ 2$ is the Hadamard power. We then extract the indices $\{(i_{k}, j_{k})\}_{k=1}^{\text{dof}}$ for which $D_{i_{k},j_{k}}\le r^{2}$ and initialize a weight vector $\mathbf{w}\in\mathbb{R}^{\text{dof}}$. This allows us to declare the weight matrix $\weight$ in sparse format by letting the nonzero entries be equal to $w_{k}$ at position $(i_{k},j_{k})$, so that $\|\weight\|_{0}=\text{dof}$. Preliminary to the training phase, we fill the \newstuff{entries of $\mathbf{w}$ with random values sampled from a suitable Gaussian distribution. The optimal choice for such distribution may be problem dependent. Empirically, we see that good results can be achieved by sampling the weights $w_{1},\dots,w_{\text{dof}}$ independently from a centered normal distribution with variance $1/\text{dof}$. Otherwise, if the architecture is particularly deep, another possibility is to consider an adaptation of the He initialization \cite{he}. In the case of $\alpha$-leakyReLU nonlinearities, the latter suggests sampling $w_{i}$ from a centered Gaussian distribution with variance
\begin{equation}\label{eq:he}\sigma_{i}^{2}=\frac{2}{(1+\alpha^{2})s_{i}},\end{equation}
where $s_{i}$ is the cardinality of the set $\{j\;|\;D_{i,j}\le r^{2}\}$, that is, the number of input neurons that communicate with the $i$th output neuron. Finally, in analogy to \cite{he}, if the layer has no activation then one may just set $\alpha=1$ in Equation \eqref{eq:he}.}\\\\
\begin{figure}
    \centering
    \includegraphics[width = \textwidth]{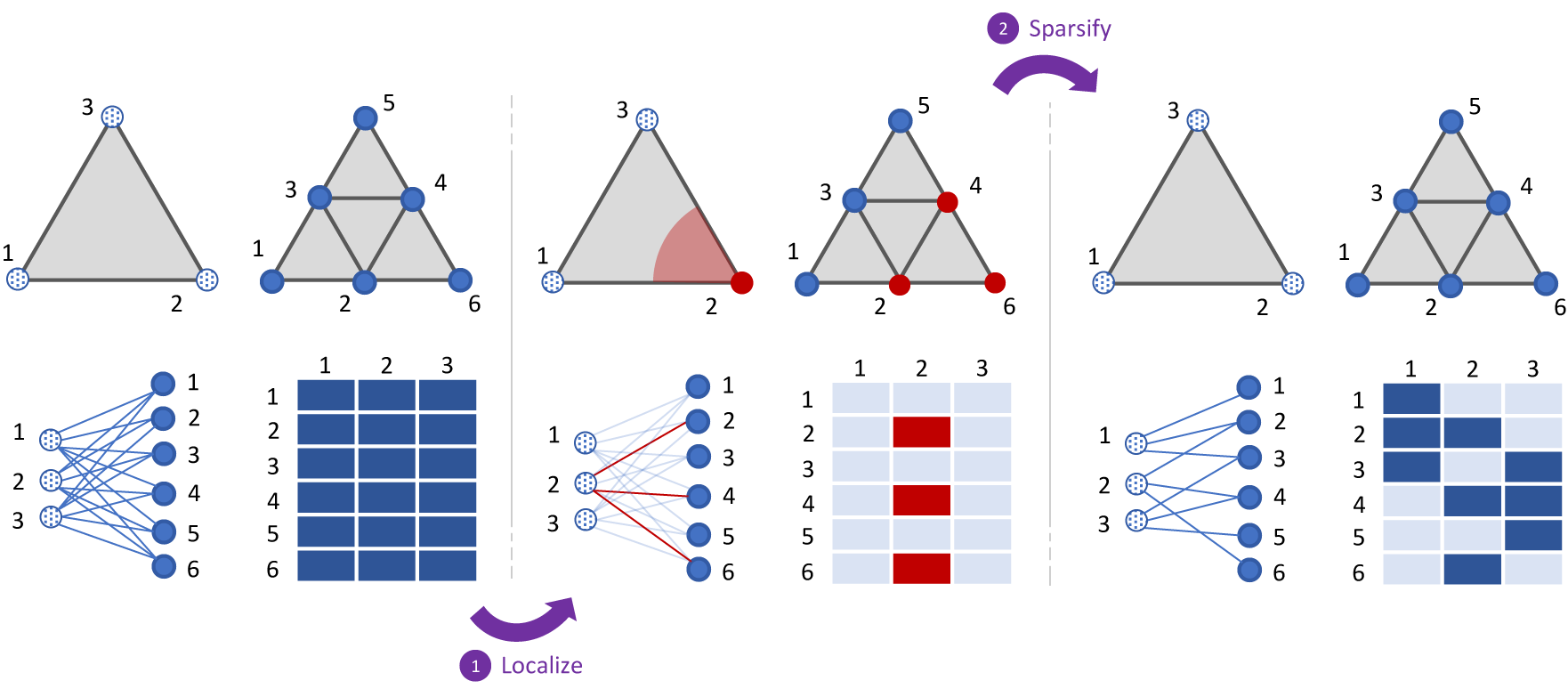}
    \caption{\newstuff{Implementation of a mesh-informed layer. Each panel reports a representation of the input/output meshes (top row), the network architecture (bottom left) and the corresponding weight matrix (bottom right). The degrees of freedom at the input/output meshes are associated to the input/output neurons of a reference dense layer (left panel). For each input node, the neighbouring nodes at output are highlighted (center panel). Then, all the remaining connections are pruned, and the final model is left with a sparse weight matrix (right panel).}}
    \label{fig:master}
\end{figure}

\noindent\newstuff{The above reasoning can be easily adapted to the general case, provided that one is able to compute efficiently all the pairwise distances $\geodistance(\mathbf{X}_{j,.},\mathbf{X}'_{i,.})$. Of course, the actual implementation will then depend on the specific choice of $\geodistance$. Since the case of geodesic distances can be of particular interest in certain applications,
we shall briefly discuss it below.
In this setting, the main difficulty arises from the fact that, in general, we are required to compute distances between points of different meshes. Additionally, if we consider Finite Element spaces of degree $q>1$, not all the Lagrangian nodes will be placed over the mesh vertices, meaning that we cannot exploit the graph structure of the mesh to calculate shortest paths.
\\
\indent To overcome these drawbacks, we propose the introduction of an auxiliary coarse mesh $\mesh_{0}:=\{K_{i}\}_{i=1}^{m}$, whose sole purpose is to capture the geometry of the domain. We use this mesh to build another graph, $\mathscr{G}$, which describes the location of the elements $K_{i}$. More precisely, let $\mathbf{c}_{i}$ be the centroid of the element $K_{i}$. We let $\mathscr{G}$ be the weighted graph having vertices $\{\mathbf{c}_{i}\}_{i=1}^{m}$, where we link $\mathbf{c}_{i}$ with $\mathbf{c}_{j}$ if and only if the elements $K_{i}$ and $K_{j}$ are adjacent. Then, to weight the edges, we use the Euclidean distance between the centroids. Once $\mathscr{G}$ is constructed, we use Dijkstra's algorithm to compute all the shortest paths along the graph. This leaves us with an estimated geodesic distance $g_{i,j}$ for each pair of centroids $(\mathbf{c}_{i},\mathbf{c}_{j})$, which we can precompute and store in a suitable matrix. Then, we approximate the  geodesic distance of \textit{any} two points $\x,\x'\in\Omega$ as
\begin{equation*}
    \geodistance(\x,\x')\approx g_{\mathpzc{i}(\x),\mathpzc{i}(\x')},
\end{equation*}
where $\mathpzc{i}:\Omega\to\{1,\dots,m\}$ maps each point to an element containing it, see Figure \ref{fig:geodesic}. In other words,
\begin{equation*}
    \mathpzc{i}(\x):=\min\{i\;|\;\x\in K_{i}\}.
\end{equation*}
If the auxiliary mesh does not fill $\Omega$ completely, the relaxed version below may be used as well
\begin{equation*}
    \mathpzc{i}(\x):=\min\left\{i\;|\;\dist(\x,K_{i})=\min_{j=1,\dots,m} \dist(\x,K_{j})\right\}.
\end{equation*}
In general, evaluating the index function $\mathpzc{i}$ can be done in $\mathcal{O}(m)$ time. In particular, going back to our original problem, we can approximate all the pairwise distances between the nodes of the input and the output spaces in $\mathcal{O}(mN_{h}+mN_{n}'+m^{2})$ time. In fact, we can run Dijkstra’s algorithm once and for all with a computational cost of $\mathcal{O}(m^{2})$. Then, we just need to evaluate the index function $\mathpzc{i}$ for all the Lagrangian nodes $\{\x\}_{i=1}^{N_{h}}$ and $\{\x'\}_{i=1}^{N_{h'}}$, which takes respectively $\mathcal{O}(mN_{h})$ and $\mathcal{O}(mN_{h'})$.} 
\begin{figure}
    \centering
    \includegraphics[width = 0.8\textwidth]{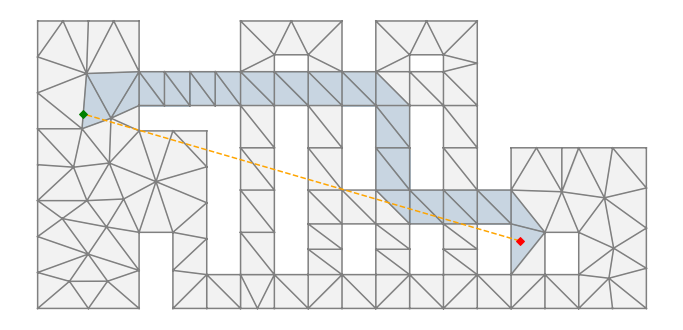}
    \caption{\newstuff{Approximation of the geodesic distance $\geodistance(\x,\y)$ as the shortest path across elements (blue triangles). In general, the latter will differ from the Euclidean distance between the two points (orange dashed line).}}
    \label{fig:geodesic}
\end{figure}

\newstuff{
\subsection{A supervised learning approach based on MINNs}
Once a mesh-informed architecture has been constructed, we learn the operator of interest through a standard supervised approach. Let $\operator_{h}:\Theta\to V_{h}$ be the high-fidelity discretization of a suitable operator,
whose pointwise evaluations may as well involve the numerical solution to a PDE. Here, we allow the input space, $\Theta$, to either consist of vectors, $\Theta\subset\mathbb{R}^{p}$, or (discretized) functions, e.g. $\Theta=V_{h}.$ Let $\Phi:\Theta\to V_{h}$ be a given MINN architecture: in general, depending on the input type, the latter will consist of both dense and mesh-informed layers. We assume that $\Phi$ has already been initialized according to some procedure, such as those reported in Section \ref{subsec:implementation}, and that it is ready for training.
\acapo
We make direct use of the forward operator to compute a suitable collection of training pairs (thereby also referred to as \textit{training samples} or \textit{snapshots}, following the classical terminology adopted in the Reduced Order Modeling literature),
$$\{f_{i},\;u_{h}^{i}\}_{i=1}^{N_{\text{train}}}\subset\Theta\times V_{h},$$
with $u_{h}^{i}:=\operator_{h}(f_{i}).$ Here, with little abuse of notation, we write $f\in\Theta$, that is, we assume to be in the case of functional inputs. If not so, the reader may simply replace the $f_{i}$'s with suitable $\mup_{i}$'s. The training pairs are typically chosen at random, i.e. by equipping the input space with a probability distribution $\mathbb{P}$ and by computing $N_{\text{train}}$ independent samples.
\acapo
We then train $\Phi$ by minimizing the mean squared $L^{2}$-error, that is, by optimizing the following loss function
\begin{equation}
    \label{eq:empirical_loss}
    \hat{\mathscr{L}}(\Phi)=\frac{1}{N_{\text{train}}}\sum_{i=1}^{N_{\text{train}}}\|u_{h}^{i}-\Phi(f_{i})\|_{L^{2}(\Omega)}^{2},
\end{equation}
which acts as the empirical counterpart of
\begin{equation}
    \label{eq:loss}
    \mathscr{L}(\Phi)=\mathbb{E}_{f\sim\mathbb{P}}\|\operator_{h}(f)-\Phi(f)\|_{L^{2}(\Omega)}^{2}.
\end{equation}
This allows one to actually tune the model parameters (i.e. all layers weights and biases), and obtain a suitable approximation $\Phi\approx\operator_{h}.$ Then, multiple metrics can be used to evaluate the quality of such an approximation. In this work, we shall often consider the average relative $L^{2}$-error, which we estimate thanks to a precomputed \textit{test set}
\begin{equation}
\label{eq:error}
\mathscr{E}(\Phi)=\frac{1}{N_{\text{test}}}\sum_{i=1}^{N_{\text{test}}}\frac{\|u_{h}^{i,\text{test}}-\Phi(f_{i}^{\text{test}})\|_{L^{2}(\Omega)}}{\|u_{h}^{i,\text{test}}\|_{L^{2}(\Omega)}}\approx\mathbb{E}_{f\sim\mathbb{P}}\left[\frac{||\operator_{h}f-\Phi(f)||_{L^{2}(\Omega)}}{||\operator_{h}f||_{L^{2}(\Omega)}}\right].
\end{equation}
The test set $\{f_{i}^{\text{test}},u_{h,\text{test}}^{i}\}_{i=1}^{N_{\text{test}}}$ is constructed independently of the training set, but still relying on the high-fidelity operator $\operator_{h}$ and the probability distribution $\mathbb{P}$.
\acapo 
If the approximation is considered satisfactory, then the expensive operator $\operator_{h}$ can be replaced with the cheaper MINN surrogate $\Phi$, whose outputs can be evaluated with little to none computational cost. 
\\\\
Before concluding this paragraph, it may be worth to emphasize a couple of things. First of all, we highlight the fact that both the loss and the error function, respectively $\hat{\mathscr{L}}$ and $\mathscr{E}$, require the computation of integral norms. However, this is not an issue: since both $u_{h}^{i}$ and $\Phi(f_{i})$ lie in $V_{h}$, we can compute these norms by relying on the mass matrix $\mathbf{M}\in\mathbb{R}^{N_{h}\times N_{h}}$, which can be precomputed and stored once and for all. We recall in fact that the latter is a highly sparse matrix defined in such a way that
$$\forall v\in V_{h},\;\;\mathbf{v}:=[v(\x_{1}),\dots,v(\x_{N_{h}})]^{T}\implies \|v\|_{L^{2}(\Omega)}=\mathbf{v}^{T}\mathbf{M}\mathbf{v},$$
where $\{\x_{i}\}_{i=1}^{N_{h}}$ are the nodes corresponding to the degrees of freedom in $V_{h}$, cf. \textit{function-to-nodes operator} in Definition \ref{def:fe}.
\acapo
Finally, we remark that the MINN architecture is trained in a purely supervised fashion. Even if the definition of the operator $\operator_{h}$ might involve a PDE or any other physical law, none of this knowledge is imposed over $\Phi$. Similarly, we do not impose boundary conditions, mass conservation, or other constraints, on the outputs of the MINN model: in principle, these should be learned implicitly. Nonetheless, we recognize that MINNs should benefit from the integration of such additional knowledge, as this is what other researchers have already observed for other approaches in the literature \cite{ChenPINN,Gao,paris}. As of now, we leave these considerations for future works.
}

\subsection{Relationship to other Deep Learning techniques}
It is worth to comment on the differences and similarities that MINNs share with other Deep Learning approaches. We discuss them below.

\subsubsection{Relationship to CNNs and GNNs}
Mesh-Informed architectures can operate at different \newstuff{levels of resolution}, in a way that is very similar to CNNs. However, their construction comes with multiple advantages. First of all, Definition \ref{def:meshinformed} adapts to any geometry, while convolutional layers typically operate on square or cubic input-output. Furthermore, convolutional layers use weight sharing, meaning that all parts of the domain are treated in the same way. This may not be an optimal choice in some applications, such as those involving PDEs, as we may want to differentiate our behavior over $\Omega$ (for instance near of far away from the boundaries). 

Conversely, MINNs share with GNNs the ability to handle general geometries. As a matter of fact, we mention that these architectures have been recently applied to mesh-based data, see e.g. \cite{meshnet,gruber,deepmind,meshcond}. With respect to GNNs, the main advantage of MINNs lies in their capacity to work at different \newstuff{resolutions}. This fact, which essentially comes from the presence of an underlying spatial domain, has at least two advantages: it \newstuff{provides a direct way for either reducing or increasing the dimensionality of a given input,} and it increases the interpretability of hidden layers (now the number of neurons is not arbitrary but comes from the chosen discretization). \newstuff{In a way, mesh-informed architectures are similar to hierarchical GNNs, however, their construction is fundamentally different. While GNNs
use \textit{aggregation} strategies to collapse neighbouring information, MINNs differentiate their behavior depending on the nodes that are being involved. This difference, makes the two approaches better suited for different applications. For instance, GNNs can be of great interest when dealing with geometric variability, as their construction allows for a single architecture to operate over completely different domains, see e.g. \cite{marta}. Conversely, if the shape of the domain is fixed, then MINNs may grant a higher expressivity, as they are ultimately obtained by pruning dense feedforward models.}
In this sense, MINNs \newstuff{only} exploit meshing strategies as auxiliary tools, and they appear to be a natural choice for learning discretized functional outputs.

\subsubsection{Relationship to DeepONets and Neural Operators}
\label{subsec:relationship}
Recently, some new DNN models have been proposed for operator learning. One of these are DeepONets \cite{karniadakis}, a mesh-free approach that is based on an explicit decoupling between the input and the space variable. More precisely, DeepONets consider a representation of the following form
$$(\mathcal{G}_{h}f)(\x)\approx\Psi(f)\cdot \phi(\x),$$
where $\cdot$ is the dot product, $\Psi: V_{h}\to \mathbb{R}^{m}$ is the \newstuff{branch-net}, and $\phi: \Omega\to\mathbb{R}^{m}$ is the \newstuff{trunk-net}. DeepONets have been shown capable of learning nonlinear operators and are now being extended to include apriori physical knowledge, see e.g. \cite{paris}.
We consider MINNs and DeepONets as two fundamentally different approaches that answer different questions. DeepONets were originally designed to process input data coming from sensors and, being mesh-free, they are particularly suited for those applications where the output is only partially known or observed. In contrast, MINNs are rooted on the presence of a high-fidelity model $\operator_{h}$ and are thus better suited for tasks such as reduced order modeling. Another difference lies in the fact that DeepONets include explicitly the dependence on the space variable $\x$\newstuff{: because of this, suitable strategies are required in order for them to deal with complex geometries or incorporate additional information, such as boundary data, see e.g. \cite{goswami}.} Conversely, MINNs can easily handle this kind of issues thanks to their global perspective.

In this sense, MINNs are much closer to the so-called Neural Operators, a novel class of DNN models first proposed by Kovachki et al. in \cite{kovachki}. Neural Operators are an extension of classical DNNs that was developed to operate between infinite dimensional spaces.
These models are ultimately based on Hilbert-Schmidt operators, meaning that their linear part, that is ignoring activations and biases, is of the form
\begin{equation}
    \label{eq:nops}
    W: f\to \int_{\Omega}k(\cdot,\x)f(\x)d\x
\end{equation}
where $k:\Omega\times\Omega\to\mathbb{R}$ is some \textit{kernel} function that has to be identified during the training phase.
Clearly, the actual implementation of Neural Operators is carried out in a discrete setting and integrals are replaced with suitable quadrature formulas. \newstuff{We mention that, among these architectures, Fourier Neural Operators (FNO) are possibly the most popular ones: we shall discuss more about the comparison between MINNs and FNOs in Section \ref{sec:experiments4}.}

The construction of Neural Operators is \newstuff{very general, to the point that other approaches, such as ours, can be seen as a special case.} Indeed, a rough Monte Carlo-type estimate of \eqref{eq:nops} would yield
$$(Lf)(\x'_{i})=\int_{\Omega}k(\x'_{i},\x)f(\x)d\x\approx\frac{|\Omega|}{\fomdim}\sum_{j}k(\x'_{i},\x_{j})f(\x_{j}).$$
If we let $\{\x_{j}\}_{j}$ and $\{\x'_{i}\}_{i}$ be the nodes in the two meshes, then the constraint in Definition 5 becomes equivalent to the requirement that $k$ is supported somewhere near the diagonal, that is $\text{supp}(k)\subseteq\{(\x,\x+\mathbf{\varepsilon})\;\text{with}\;|\mathbf{\varepsilon}|\le r\}$. 
\\\\
We believe that these parallels are extremely valuable, as they indicate the existence of a growing scientific community with common goals and interests. \newstuff{Furthermore, they all contribute to the enrichment of the operator learning literature, providing researchers with multiple alternatives from which to choose: in fact, as we shall see in Section \ref{sec:experiments4}, depending on the problem at hand one methodology might be better suited than the others.}

\section{Numerical experiments: effectiveness of the pruning strategy}
\label{sec:experiments1}
We provide empirical evidence that the sparsity introduced by MINNs can be of great help in learning maps that involve functional data, such as nonlinear operators, showing a reduced computational cost and better generalization capabilities. We first detail the setting of each experiment alone, specifying the corresponding operator to be learned and the adopted MINN architecture. Then, at the end of the current Section, we discuss the numerical results.

Throughout all our experiments, we adopt a standardized approach for designing and training the networks. In general, we always employ the 0.1-leakyReLU activation for all the hidden layers, while we do not use any activation at the output. Every time a mesh-informed architecture is introduced, we also consider its dense counterpart, obtained without imposing the sparsity constraints. Both networks are then trained following the same criteria, so that a fair comparison can be made.
As loss function, we always consider the mean square error computed with respect to the $L^{2}$ norm, cf. Equation \eqref{eq:loss}. 

The optimization of the loss function is performed via the L-BFGS optimizer, with learning rate always equal to 1 and no batching. What may change from case to case are the network architecture, the number of epochs, and the size of the training set. After training, we compare mesh-informed and dense architectures by evaluating their performance on 500 unseen samples (test set), which we use to compute an unbiased estimate of the average relative error, cf. Equation \eqref{eq:error}.

\noindent\newline All the code was written in Python 3, mainly relying on \newstuff{the \texttt{FEniCS} 
and \texttt{mshr} libraries 
for the construction of Finite Element spaces and the numerical solution of PDEs. The implementation of neural network models, instead, was handled in \texttt{Pytorch}, 
and their training was carried out on a GPU NVidia Tesla V100 (32GB of RAM).}
\begin{figure}
\newcommand{\spaziatura}{\hspace{8.5em}}
\flushleft
\hspace{1em}a)\spaziatura b)\spaziatura c)\spaziatura d)
\includegraphics[width=\textwidth]{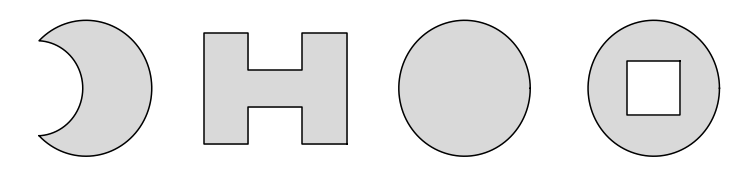}
\caption{\label{fig:domains}Domains considered for the numerical experiments in Section \ref{sec:experiments1}. Panel a): up to boundaries, $\Omega$ is the difference of two circles, $B(\textbf{0}, 1)$ and $B(\x_{0},0.7)$, where $\x_{0}=(-0.75, 0)$. Panel b): A polygonal domain obtained by removing the rectangles $[-0.75, 0.75]\times[0.5, 1.5]$ and $[-0.75,0.75]\times[-1.5,-0.5]$ from $(-2,2)\times(-1.5, 1.5)$. Panel c): the unit circle $B(\textbf{0},1)$. Panel d): $\Omega$ is obtained by removing a square, namely $[-0.4,0.4]^{2}$, from the unit circle $B(\textbf{0},1)$.}\end{figure}

\subsection{Description of the benchmark operators}

\subsubsection*{Learning a parametrized family of functions}
Let $\Omega$ be the domain defined as in Figure 2a. For our first experiment, we consider a variation of a classical problem concerning the calculation of the \textit{signed distance function} of $\Omega$. This kind of functions are often encountered in areas such as computer vision \cite{computervision} and real-time rendering \cite{rendering}. In particular, we consider the following operator, 
$$\operator:\Theta\subset\mathbb{R}^{3}\to L^{2}(\Omega)$$
$$\operator:\mup\to u_{\mup}(\x):=\min_{\substack{\y\in\partial\Omega,\\y_{2}>\mu_{1}}}\;|\y-\mathbf{A}_{\mu_{3}}\x|\textrm{e}^{x_{1}\mu_{2}}$$
where $\mup=(\mu_1,\mu_2,\mu_3)$ is a finite dimensional vector, and $\mathbf{A}_{\mu_3} = \text{diag}(1,\mu_{3})$. In practice, the value of $u_{\mup}(\x)$ corresponds to the (weighted) distance between the dilated point $\mathbf{A}_{\mu_{3}}\x$ and the truncated boundary $\partial\Omega\cap\{\y:\;y_{2}>\mu_{1}\}$.

As input space we consider $\Theta:=[0,1]\times[-1,1]\times[1,2]$, endowed with the uniform probability distribution. Since the input is finite-dimensional, we can think of $\operator$ as to the parametrization of a 3-dimensional hypersurface in $L^{2}(\Omega)$. We discretize $\Omega$ using P1 triangular finite elements with mesh stepsize $h=0.02$, resulting in the high-fidelity space $V_{h}:=\cong\mathbb{R}^{13577}$. To learn the discretized operator $\operator_{h}$, we employ the following MINN architecture
$$\mathbb{R}^{3}\to\mathbb{R}^{100}\to V_{9h}\xrightarrow{\;r=0.4\;}V_{3h}\xrightarrow{\;0.2\;}V_{h},$$
\newstuff{where the supports are defined according to the Euclidean distance.}
The corresponding dense counterpart, which servers as benchmark, is obtained by removing the sparsity constraints (equivalently, by letting the supports go to infinity). We train the networks on 50 samples and for a total of 50 epochs.

\subsubsection*{Learning a local nonlinear operator}
As a second experiment, we learn a nonlinear operator that is local with respect to the input. Let $\Omega$ be as in Figure 2b. We consider the \textit{infinitesimal area} operator $\operator: H^{1}(\Omega)\to L^{2}(\Omega)$,
$$\operator: u\to\sqrt{1+|\nabla u|^{2}}.$$
Note in fact that, if we associate to each $u\in H^{1}(\Omega)$ the cartesian surface $$S_{u}:=\{(x_{1}, x_{2}, u(x_{1}, x_{2})\}\subset\mathbb{R}^{3},$$ then $\operator u$ yields a measure of the area that is locally spanned by that surface, in the sense that
$$\int_{S_{u}}\phi(\s)d\s=\int_{\Omega}\phi(u(\x))\left(\operator u\right)(\x)d\x$$
for any continuous map $\phi:S_{u}\to\mathbb{R}.$
Over the input space $\Theta:=H^{1}(\Omega)$ we consider the probability distribution $\mathbb{P}$ induced by a Gaussian process with mean zero and covariance kernel
$$\text{Cov}(\x, \y) = \frac{1}{|\Omega|}\exp\left(-\frac{1}{2}|\x-\y|^{2}\right).$$
We discretize $\Omega$ using a triangular mesh of stepsize $h=0.045$ and P1 finite elements, which results in a total of $\fomdim = 11266$ vertices. To sample from the Gaussian process we truncate its Karhunen-Loeve expansion at $k=100$. Conversely, the output is computed numerically by exploiting the high-fidelity mesh as a reference.

To learn $\operator_{h}$ we use the MINN architecture below,
$$V_{h}\xrightarrow{\;r=0.15\;}V_{3h}\xrightarrow{\;r=0.3\;}V_{3h}\xrightarrow{\;r=0.15\;}V_{h},$$
\newstuff{where the supports are intended with respect to the Euclidean metric. We train our model over 500 snapshots and for a total of 50 epochs.} 

\subsubsection*{Learning a nonlocal nonlinear operator}
Since MINNs are based on local operations, it is of interest to assess whether they can also learn nonlocal operators. To this end, we consider the problem of learning the Hardy-Littlewood Maximal Operator $\operator: L^{2}(\Omega)\to L^{2}(\Omega)$,
$$\left(\operator f\right)(\x):=\sup_{r>0}\;\strokedint_{|\y-\x|<r}|f(\y)|d\y$$
which is known to be a continuous nonlinear operator from $L^{2}(\Omega)$ onto itself \cite{hardy}. Here, we let $\Omega:=B(\mathbf{0},1)\subset\mathbb{R}^{2}$ be the unit circle. Over the input space $\Theta:=L^{2}(\Omega)$ we consider the probability distribution $\mathbb{P}$ induced by a Gaussian process with mean zero and covariance kernel
$$\text{Cov}(\x, \y) = \exp\left(-|\x-\y|^{2}\right).$$
As a high-fidelity reference, we consider a discretization of $\Omega$ via P1 triangular finite elements of stepsize $h=0.033$, resulting in a state space $V_{h}$ with $N_{h}=7253$ degrees of freedom. As for the previous experiment, we sample from $\mathbb{P}$ by considering a truncated Karhunen-Loeve expansion of the Gaussian process (100 modes). Conversely, the true output $u\to\operator_{h}(u)$ is computed approximately by replacing the suprema over $r$ with a maxima across 50 equispaced radii in $[h,2]$. To learn $\operator_{h}$ we use a MINN of depth 2 with a dense layer in the middle,
$$V_{h}\xrightarrow{\;r=0.25\;} V_{2h}\to V_{2h}\xrightarrow{\;r=0.25\;}V_{h}.$$
The idea is that nonlocality can still be enforced through the use of fully connected blocks, but this are only inserted at the lower fidelity levels to reduce the computational cost. We train the architectures over 500 samples and for a total of 50 epochs. \newstuff{Also in this case, we build the mesh-informed layers upon the Euclidean distance.}

\subsubsection*{Learning the solution operator of a nonlinear PDE} For our final experiment, we consider the case of a parameter dependent PDE, which is a framework of particular interest in the literature of Reduced Order Modeling. In fact, learning the solution operator of a PDE model by means of neural networks allows one to replace the original numerical solver with a much cheaper and efficient surrogate, which enables expensive multi-query tasks such as PDE constrained optimal control, Uncertainty Quantification or Bayesian Inversion.

Here, we consider a steady version of the  porous media equation, defined as follows
\begin{equation}
\label{eq:nonlinear}
-\nabla\cdot\left(|u|^{2}\nabla u\right) + u = f.\end{equation}
The PDE is understood in the domain $\Omega$ defined in Figure 2d, and it is complemented with homogeneous Neumann boundary conditions. We define $\operator$ to be the data-to-solution operator that maps $f\to u$. 
This time, we endow the input space with the push-forward distribution $\#\mathbb{P}$ induced by the square map $f\to f^{2}$, where $\mathbb{P}$ is the probability distribution associated to a Gaussian random field with mean zero and covariance kernel
$$\text{Cov}(\x,\y)=\frac{1}{1+|\x-\y|^{2}}.$$
To sample from the latter distribution we exploit a truncated Karhunen-Loeve expansion of the random field. We set the truncation index to $k=20$ as that is sufficient to fully capture the volatility of the field. For the high-fidelity discretization, we consider a mesh of stepsize $h=0.03$ and P1 finite elements, resulting in $N_{h}=5987$.
Finally, we employ the MINN architecture below,
\begin{equation}
\label{eq:archnonlin}
V_{h}\xrightarrow{\;r=0.4\;} V_{4h}\to V_{2h}\xrightarrow{\;r=0.2\;}V_{h},\end{equation}
\newstuff{where the supports are defined according to the Euclidean distance. We train our model} over 500 snapshots and for a total of 100 epochs. Note that, as in our third experiment, we employ a dense block at the center of the architecture. This is because the solution operator to a boundary value problem is typically nonlocal (consider, for instance, the Green formula for the Poisson equation).

\subsection{Numerical results}
\renewcommand{\arraystretch}{2.5}
\newcommand{\tb}[1]{\Centerstack[l]{#1}}
\newcommand{\dcell}[2]{\tb{#1\\#2}}
\begin{table}
 \begin{tabular}{| l | l | l | l | l |} 
 \hline
  Operator & $\fomdim$ & Architecture & Test error & Gen. error\\
 \hline\hline
 \tb{Low-dimensional\\manifold} & 13'577 & \dcell{Mesh-Informed}{Dense} & \dcell{4.14\%}{4.78\%} & \dcell{3.08\%}{4.07\%}\\
 \hline
 \tb{Local area operator}   & 11'266 &\dcell{Mesh-Informed}{Dense} & \dcell{1.49\%}{3.89\%} & \dcell{1.36\%}{2.54\%}\\
 \hline
 \tb{H-L maximal operator} & 7'253 &\dcell{Mesh-Informed}{Dense} & \dcell{3.65\%}{6.70\%} & \dcell{1.49\%}{3.09\%}\\
 \hline
 \tb{Nonlinear PDE solver} & 5'987 &\dcell{Mesh-Informed}{Dense} &  \dcell{4.29\%}{15.67\%} &  \dcell{3.03\%}{6.67\%}\\
 \hline
\end{tabular}
\vspace{1em}
\caption{\label{tab:minns} 
Comparison of Mesh-Informed Neural Networks and Fully Connected DNNs for the test cases in Section \ref{sec:experiments1}. $\fomdim$ = number of vertices in the (finest) mesh. Gen. error = Generalization error, defined as the gap between training and test errors. All reported errors are intended with respect to the $L^{2}$-norm, see Equation \eqref{eq:error}.}
\end{table}
\begin{figure}
\includegraphics[width=0.85\textwidth]{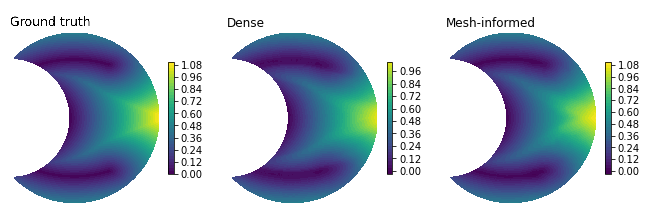}
\caption{\label{fig:testminns1}Comparison of DNNs and MINNs when learning a low-dimensional manifold $\mup\to u_{\mup}\in L^{2}(\Omega)$, cf. Section \ref{sec:experiments1}.1. The reported results correspond to the approximations obtained on an unseen input value $\mup^{*}=[0.42, 0.04, 1.45].$}\end{figure}
\begin{figure}
\includegraphics[width=0.95\textwidth]{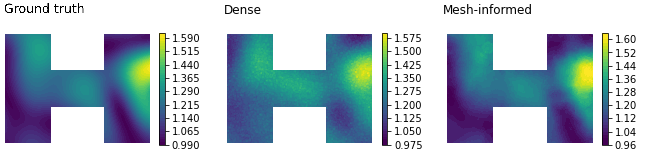}
\caption{\label{fig:testminns2}Comparison of DNNs and MINNs when learning the local operator $u\to\sqrt{1+|\nabla u|^{2}}$, cf. Section \ref{sec:experiments1}.1. The reported results correspond to the approximations obtained for an input instance outside of the training set.}\end{figure}
\begin{figure}
\includegraphics[width=0.95\textwidth]{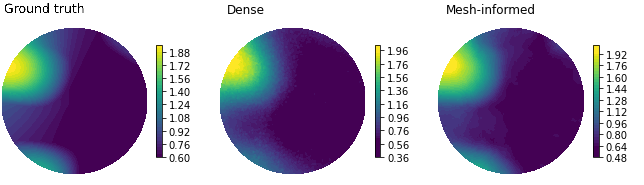}
\caption{\label{fig:testminns3}Comparison of DNNs and MINNs when learning the Hardy-Littlewood Maximal Operator, cf. Section \ref{sec:experiments1}.1. The pictures correspond to the results obtained for an unseen input instance.}\end{figure}
\begin{figure}
\includegraphics[width=0.95\textwidth]{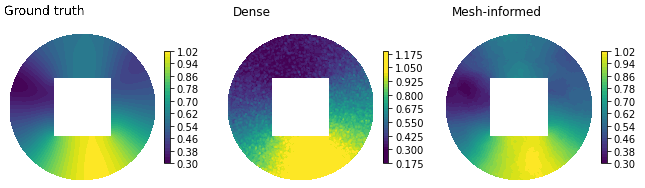}
\caption{\label{fig:testminns4}Comparison of DNNs and MINNs when learning the solution operator $f\to u$ of a nonlinear PDE, cf. Section \ref{sec:experiments1}.1. The reported results correspond to the approximations obtained for an input instance $f$ outside of the training set.}\end{figure}
Table \ref{tab:minns} reports the numerical results obtained across the four experiments. In general, MINNs perform better with respect to their dense counterpart, with relative errors that are always below 5\%. As the operator to be learned becomes more and more involved, fully connected DNNs begin to struggle, eventually reaching an error of 15\% in the PDE example. In contrast, MINNs are able to keep up and maintain their performance. This is also due to the fact that, having less parameters, MINNs are unlikely to overfit, instead they can generalize well even in poor data regimes (cf. last column of Table \ref{tab:minns} Consider for instance the first experiment, which counted as little as 50 training samples. There, the dense model returns an error of 4.78\%, of which 4.07\% is due to the generalization gap. This means that the DNN model actually surpassed the MINN performance over the training set, as their training errors are respectively of $0.71\%$  and $1.06\%$. However, the smaller generalization gap allows the sparse architecture to perform better over unseen inputs.

Figures \ref{fig:testminns1} to \ref{fig:testminns4} reports some examples of approximation on unseen input values. There, we note that dense models tend to have noisy outputs (Figures \ref{fig:testminns2} and \ref{fig:testminns4}) and often miscalculate the range of values spanned by the output (Figures \ref{fig:testminns1} and \ref{fig:testminns3}). Conversely, MINNs always manage to capture the main features present in the actual ground truth.
\newstuff{This goes to show that MINNs are built upon an effective pruning strategy, thanks to which they are able to overcome the limitations entailed by dense architectures. This phenomenon can be further appreciated in the plot reported in Figure \ref{fig:supports}.
The latter refers to the nonlinear PDE example, Equation \eqref{eq:nonlinear}, where our MINN architecture was of the form
\begin{equation}
\label{eq:deltaminn}
V_{h}\xrightarrow{\;r=\delta\;} V_{4h}\to V_{2h}\xrightarrow{\;r=\frac{1}{2}\delta\;}V_{h},\end{equation}
with $\delta=0.4$. Figure \ref{fig:supports} shows how the choice of the support size, $\delta$, affects the performance of the MINN model. Since, here, $\Omega$ has a diameter of 2, for $\delta=4$ the architecture is formally equivalent to a dense DNN. As the support decreases, the first and the last layer become sparser, and, in turn, the architecture starts to generalize more, ultimately reducing the test error by nearly 11\%. However, if the support is too small, e.g. $\delta\le0.125$, this may have a negative effect on the expressivity of the architecture, which now has too few degrees of freedom to properly approximate the operator of interest. In general, the best support size is to be found in the middle, $0\le\delta^{*}\le\diam(\Omega)$: 
 of note, in this case, our original choice happens to be nearly optimal.
}
\begin{figure}
\begin{center}
\includegraphics[width=0.7\textwidth]{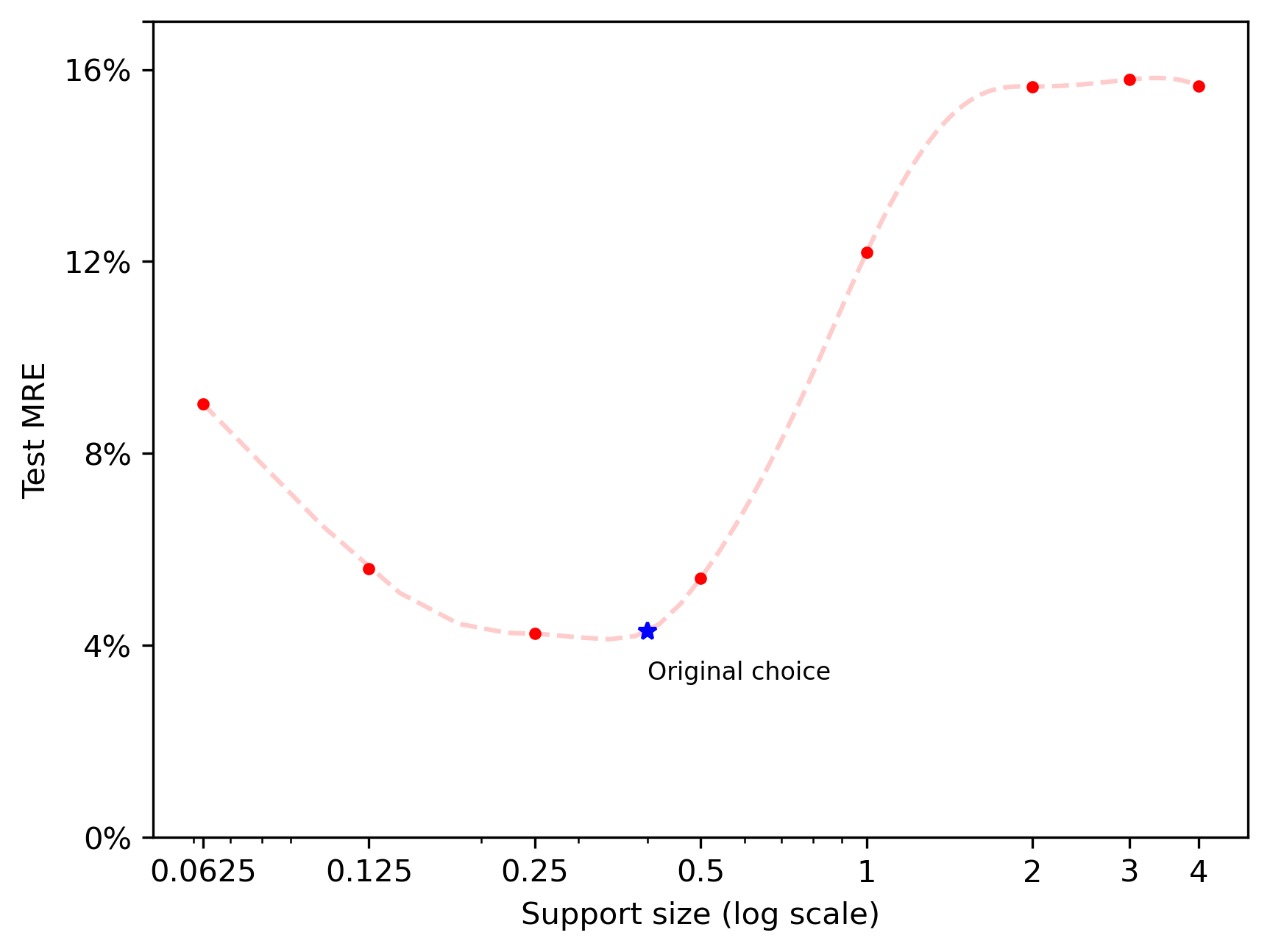}
\end{center}
\caption{\newstuff{\label{fig:supports}Relationship between model accuracy and support size in mesh-informed layers. Case study: nonlinear PDE, see Equation \eqref{eq:nonlinear}; MINN architecture: see Equation \eqref{eq:deltaminn}. Red dots correspond to different choices for the support of the mesh-informed layers; cubic splines are used to draw the general trend (red dashed line). In blue, our original choice for the architecture, see Equation \eqref{eq:archnonlin}. Errors are computed according to the $L^{2}$-norm. The $x$-axis is reported in logarithmic scale.}}\end{figure}
\renewcommand{\arraystretch}{2.5}
\newcommand{\ghost}[1]{\textcolor{white}{#1}}
\begin{table}[tb]
 \begin{tabular}{| l | r | r | rr |} 
 \hline\vspace{-1.5em}
Architecture & dof & \tb{Training speed} & Memory usage &\\

 &&&(static)&(training)\\
 \hline\hline
 \dcell{Mesh-Informed}{Dense} & \dcell{\ghost{0}0.3M}{21.7M} & \newstuff{\dcell{0.64 s/ep}{2.47 s/ep}} & \dcell{\ghost{0}1.3 Mb}{86.9 Mb} & \dcell{0.01 Gb}{3.49 Gb} \\
 \hline
 \dcell{Mesh-Informed}{Dense} & \dcell{\ghost{0}0.3M}{31.0M} & \newstuff{\dcell{1.21 s/ep}{3.89 s/ep}} & \dcell{\ghost{00}1.0 Mb}{124.1 Mb} & \dcell{0.04 Gb}{4.96 Gb} \\
 \hline
 \dcell{Mesh-Informed}{Dense} & \dcell{\ghost{0}1.0M}{12.8M} & \newstuff{\dcell{0.68 s/ep}{1.66 s/ep}} & \dcell{\ghost{0}3.9 Mb}{51.2 Mb} & \dcell{0.16 Gb}{2.05 Gb} \\
 \hline
 \dcell{Mesh-Informed}{Dense} &  \dcell{\ghost{0}1.4M}{12.5M} & \newstuff{\dcell{0.70 s/ep}{1.31 s/ep}} & \dcell{\ghost{0}5.4 Mb}{50.0 Mb} & \dcell{0.22 Gb}{2.00 Gb}\\
 \hline
\end{tabular}
\vspace{1em}
\caption{\label{tab:minnscosts} 
Comparison of Mesh-Informed Neural Networks and Fully Connected DNNs in terms of their computational cost. dof = degrees of freedom, i.e. number of parameters to be optimized during the training phase. Memory usage (static) = bytes required to store the architecture. Memory usage (optimization) =  bytes required to run a single epoch of the training phase. s/ep = seconds per epoch, M = millions, Mb = Megabytes, Gb = Gigabytes.}
\end{table}
\\\\
\newstuff{Aside from the improvement in performance}, Mesh-Informed Neural Networks also allow for a significant reduction in the computational cost (cf. Table \ref{tab:minnscosts}). In general, MINNs are ten to a hundred times lighter with respect to fully connected DNNs. While this is not particularly relevant once the architecture is trained (the most heavy DNN weights as little as 124 Megabytes), it makes a huge difference during the training phase. In fact, additional resources are required to optimize a DNN model, as one needs to keep track of all the operations and gradients in order to perform the so-called \textit{backpropagation} step. This poses a significant limitation to the use of dense architectures, as the entailed computational cost can easily exceed the capacity of modern GPUs. For instance, in our experiments, fully connected DNNs required more than 2 GB of memory during training, while, depending on the operator to be learned, 10 to 250 MB were sufficient for MINNs. Clearly, one could also alleviate the computational burden by exploiting cheaper optimization routines, such as first order optimizers and batching strategies, however this typically prevents the network from actually reaching the global minimum of the loss function. In fact, we recall that the optimization of a DNN architecture is, in general, a non-convex and ill-posed problem. Of note, despite being 10 to 100 times lighter, MINNs are only 2 to 4 times faster during training. We believe that these results can be improved, possibily by optimizing the code used to implement MINNs.

\newstuff{\section{Numerical experiments: handling general nonconvex domains}
\label{sec:experiments2}
In the previous Section, when building our mesh-informed architectures, we always referred to the Euclidean distance. This is because, for our analysis, we considered spatial domains that were still quite simple in terms of shape and topology. In this sense, while most of those domains were nonconvex, the Euclidean metric was still satisfactory for quantifying distances.
\acapo
In this Section, we would like to investigate this further, by testing MINNs over geometries that are far more complicated. We shall provide two examples: one concerning the flow of a Newtonian fluid through a system of channels, and one describing a reaction-diffusion equation solved on a 2D section of the human brain.
In both cases, we shall not provide comparisons with other Deep Learning techniques: the only purpose for this Section is to assess the ability of MINNs in handling complicated geometries.}

\newstuff{\subsection{Stokes flow in a system of channels}
\label{subsec:stokes}
As a first example, we consider the solution operator to the following parametrized (stationary) Stokes equation,
\begin{equation}
    \label{eq:stokes}
    \begin{cases}
    -\Delta\mathbf{u}+\nabla p = 0 & \text{in}\;\Omega\\
    -\nabla\cdot\mathbf{u}=0 & \text{in}\;\Omega\\
    p=0 & \text{on}\;\Gamma_{\text{out}}\\
    \mathbf{u}=0 & \text{on}\;\Omega\setminus(\Gamma_{\text{in}}\cup\Gamma_{\text{in}}\cup_{i=1}^{4}\Gamma_{i})\\
    \mathbf{u}=\mathbf{f}_{y} & \text{on}\;\Gamma_{\text{in}}\\
    \mathbf{u}=\mathbf{g}^{(i)}_{a_{i}} & \text{on}\;\Gamma_{i},\;\;i=1,\dots,4,
    \end{cases}
\end{equation}
where $p$ and $\mathbf{u}$ are respectively the fluid pressure and velocity, while $\Omega\subset\mathbb{R}^{2}$ is the domain depicted in Figure \ref{fig:pipes}.
The PDE depends on five scalar parameters. The first one, $y\in[0.1, 0.8]$ is used to parametrize the location of the inflow condition, which  is given as
\begin{equation*}
    \mathbf{f}_{y}(\mathbf{x}) = 100\cdot\left[\mathbf{1}_{y-0.1,y+0.1}(x_{2})((x_{2}-y)^{2}+0.01),\;0\right]
\end{equation*}
In other words, $\mathbf{f}_{y}$ describes a jet flow centered at $y$ and directed towards the right. Viceversa, the remaining four parameters, $a_{1},\dots,a_{4}\in[0,1]$, are used to model the presence of possible leaks at the corresponding regions $\Gamma_{1},\dots,\Gamma_{4}$. In particular, for each $i=1,\dots,4$,
\begin{equation*}
    \mathbf{g}^{(i)_{a_{i}}}(\x)=a_{i}\eta_{i}(x_{1})(1-\eta_{i}(x_{1}))\cdot\mathbf{n},
\end{equation*}
where $\mathbf{n}$ is the external normal, while the $\eta_{i}$'s are suitable affine transformations from $\mathbb{R}\to\mathbb{R}$ such that $\eta_{i}(\{x_{1}\;|\;\x\in\Gamma_{i}\})=[0,1].$ 
Our output of interest, instead, is the velocity vector field $\mathbf{u}:\Omega\to\mathbb{R}^{2}$, meaning that the operator under study is
\begin{equation*}
    \operator:(y,a_{1},\dots,a_{4})\to\mathbf{u}. 
\end{equation*}
\begin{figure}
    \centering
    \includegraphics[width=0.8\textwidth]{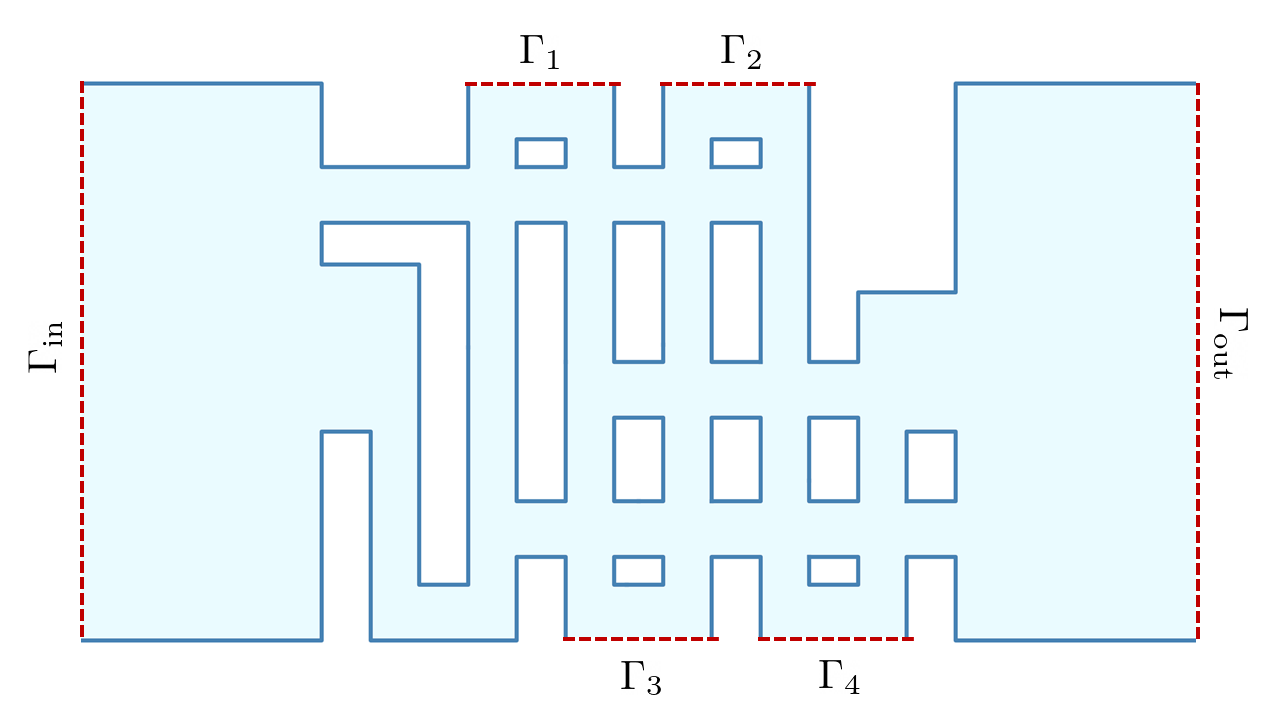}
    \caption{\newstuff{Spatial domain for the Stokes flow example, Section \ref{subsec:stokes}.}}
    \label{fig:pipes}
\end{figure}
\noindent As high-fidelity reference, we consider a discretized setting where the domain $\Omega$ is endowed with a triangular mesh of stepsize $h=0.08,$ the pressure $p$ is sought in the Finite Element space of piecewise-linear polynomials, $Q_{h}=X_{h}^{1}$, while the velocity $\mathbf{u}$ is found in the space $V_{h}$ of P1-Bubble vector elements \cite{quarteroni}, i.e. $X_{h}^{1}\times X_{h}^{1} \subset V_{h}\subset X_{h}^{3}\times X_{h}^{3}$. This is done in order to ensure the numerical stability of classical finite element schemes associated to \eqref{eq:stokes}. Then, our purpose is to approximate the operator $$\operator_{h}:[0.1,1.9]\times[0,1]^{4}\to V_{h}\cong\mathbb{R}^{17'050}$$ with a suitable MINN surrogate, which we build as
\begin{equation}
    \label{eq:stokesminn}
    \mathbb{R}^{5}\to\mathbb{R}^{100}\to V_{9h}\xrightarrow{\;r=1\;} V_{h}\xrightarrow{\;r=0.08\;}V_{h}.
\end{equation}
Note that, even for the hidden state at the third/fourth layer, we consider a space comprised of Bubble elements (however, this is not mandatory). Here, in order to handle the fact that $V_{h}$ consists of 2-dimensional vector fields, each Lagrangian node is counted twice during the construction of the mesh-informed layers. As a consequence, the mesh-informed layers will allow information to travel locally in space but also interact in between dimensions.
\acapo
As usual, we employ the 0.1-leakyReLU nonlinearity at the internal layers, while we do not use any activation at output. We train our architecture over 450 randomly sampled snapshots, and for a total of 50 epochs. Following the same criteria as in Section \ref{sec:experiments1}, we train the network by optimizing the average squared error with respect to the $L^{2}$-norm, cf. Equation \ref{eq:loss}, without using batching strategies and by relying on the L-BFGS optimizer.
This time, however, in order to account for the complicated shape of the spatial domain $\Omega$, we impose the sparsity constraints in the mesh-informed layers according to the geodesic distance. We compute the latter as in Section \ref{subsec:implementation}, where we exploit a coarse mesh of stepsize $h^{*}=0.2$ to approximate distances.
\\\\
After training, our MINN surrogate reported an average $L^{2}$-error of 5.39\% when tested over fifty new parameter instances, a performance that we consider to be satisfactory. In particular, as it can be further appreciated from Figure \ref{fig:stokes}, the proposed MINN architecture succeeded in learning the main characteristics of the operator under study. Of note, we mention that, in this case, implementing the dense counterpart of architecture \ref{eq:stokesminn}, as we did back in Section \ref{sec:experiments1}), is computationally prohibitive. In fact, the latter would have $\approx300\cdot10^{6}$ degrees of freedom, which, for instance, our GPU cannot handle.
}

\begin{figure}
    \centering
    \includegraphics[width = 0.97\textwidth]{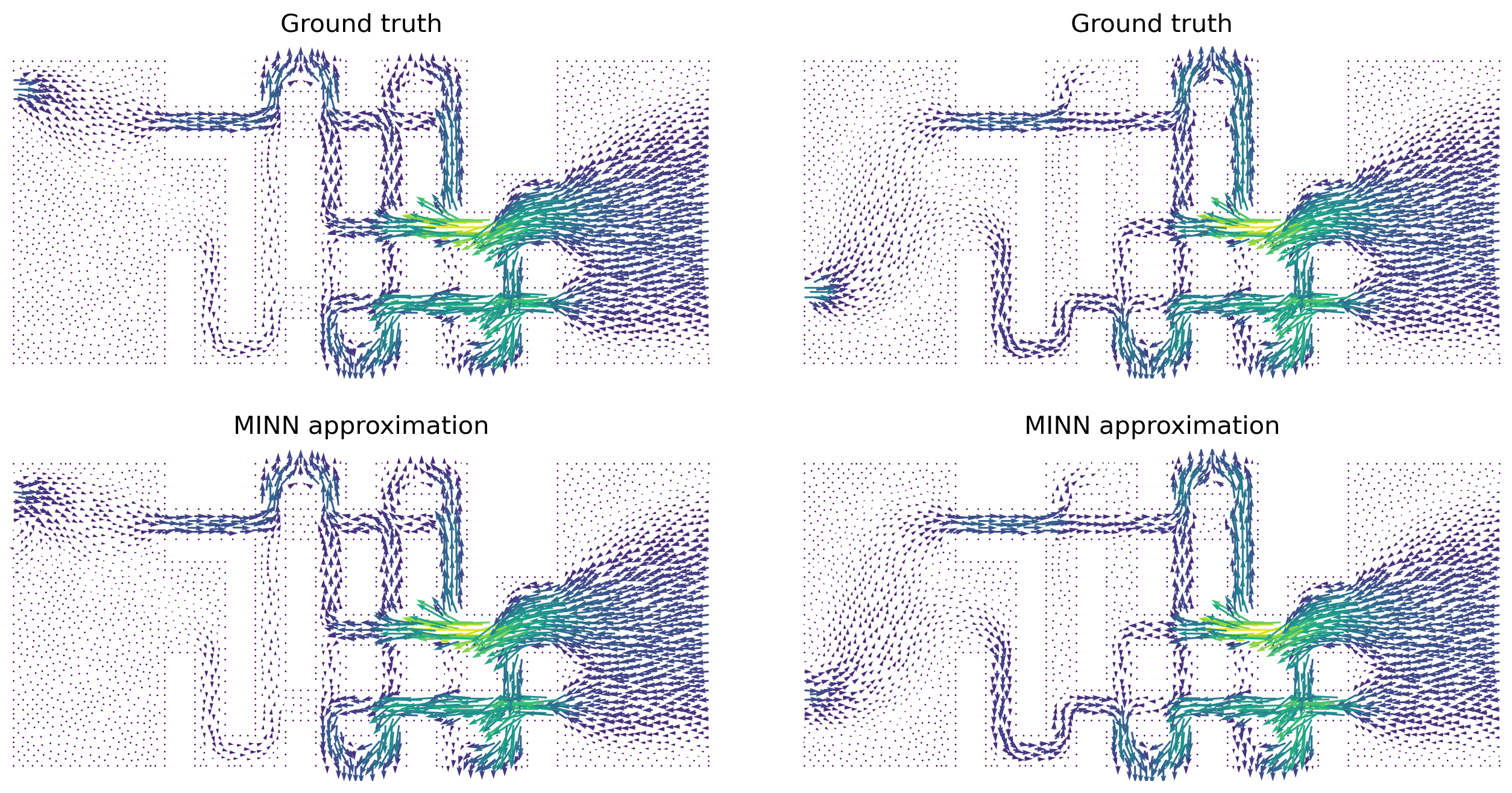}
    \caption{\newstuff{Ground truth vs MINN approximation for the Stokes flow example, Section \ref{subsec:stokes}. The solutions reported refer to two different configurations of the parameters (not seen during training), respectively on the left/right.}}
    \label{fig:stokes}
\end{figure}

\newstuff{\subsection{Brain damage recovery}
\label{subsec:brain}
As a second example we consider a problem concerning a nonlinear diffusion reaction of Fisher-Kolmogorov–Petrovsky–Piskunov (F-KPP) type \cite{fisher} in a two-dimensional domain representing a vertical section of a human brain across the sagittal plane. Specifically, we consider the following time-dependent PDE
\begin{equation}
\label{eq:brain}
\begin{cases}
    \displaystyle
    \frac{\partial u}{\partial t} - \frac{1}{10}\Delta u = 10^{3}u(1-u) & \text{in}\;\Omega\times(0,+\infty)\\
    \nabla u \cdot \mathbf{n}=0 & \text{in}\;\partial\Omega\times(0,+\infty)\\
    u(\mathbf{x},0)=\varphi(\mathbf{x})&\text{in}\;\Omega.
\end{cases}
\end{equation}
In the literature, researchers have used the F-KPP equation to model a multitude of biological phenomena, including cell proliferation and wound healing, see e.g. \cite{habbal,sherratt}. In this sense, we can think of \eqref{eq:brain} as an equation that models the recovery of a damaged human brain, even though much more sophisticated models would be required to properly describe such phenomenon \cite{ravasio}. 
\acapo
We thus interpret the solution to Equation \eqref{eq:brain}, $u:\Omega\times[0,+\infty)\to[0,1]$, as a map that describes the level of healthiness of the brain at a given location at a given time (0 = highly damaged, 1 = full health). Then, the dynamics induced by the F-KPP equation resembles that of a healing process: in fact, for any initial profile $\varphi:\Omega\to[0,1]$, later modeled as a random field, one has $u(\mathbf{x},t)\to 1$ for $t\to+\infty$, where the speed of such convergence depends both on $\varphi$ and on the point $\mathbf{x}$. In light of this, we introduce an additional map, $\tau:\Omega\to[0,+\infty)$, which we define as
\begin{equation}
\tau(\mathbf{x}):=\inf\left\{t\ge0\;|\;u(\mathbf{x},t)\ge0.9\right\}.    
\end{equation}
In other words, $\tau(\mathbf{x})$ corresponds to the amount of time required by the healing process in order to achieve a suitable recovery at the point $\mathbf{x}\in\Omega$. Our objective is to learn the operator
\begin{equation}
    \label{eq:brainoperator}
    \operator:\varphi\to\tau,
\end{equation}
which maps any given initial condition to the corresponding time-to-recovery map. To tackle this problem, we first discretize $\Omega$ through a triangular mesh of stepsize $h=0.0213$, over which we define the state space of continuous piecewise-linear finite elements, $V_{h}$, with $\text{dim}(V_{h})=2414$. Then, for any fixed initial condition $\varphi$, we solve \eqref{eq:brain} numerically by employing the Finite Element method in space and the backward Euler scheme in time ($\Delta t=10^{-4}$). In particular, in order to compute an approximation of $\tau$ at the mesh vertices, we evolve the PDE up until the first time at which all nodal values of the solution are above 0.9. In this way, for each $\varphi\in V_{h}$ we are able to compute a suitable $\tau\in V_{h}$, with $\tau\approx\operator(\varphi)$. Precisely, in order to guarantee that $\varphi$ always takes values in [0,1], and that its realizations have an underlying spatial correlation coherent with the geometry of $\Omega$, we cast our learning problem in a probabilistic setting where we endow the input space with the push-forward measure $\#\mathbb{P}$ induced by the map
\begin{equation*}
    g\to \frac{1}{2}\tanh\left(10g+5)\right)+\frac{1}{2}
\end{equation*}
where $\mathbb{P}$ is the probability law of a centered Gaussian random field $g$ defined over $\Omega$ with Covariance kernel
$$\text{Cov}(\x,\y):=e^{-100\geodistance(\x,\y)^{2}}$$
$\geodistance$ being the geodesic distance across $\Omega$.
\acapo
As model surrogate, we consider the MINN architecture reported below,
\begin{equation}
    \label{eq:brainminn}
    V_{h}\xrightarrow{\;r=0.05\;}V_{\frac{5}{2}h}\xrightarrow{\;r=0.1\;}V_{\frac{5}{2}h}\xrightarrow{\;r=0.05\;}V_{h}.
\end{equation}
which we train over 900 random snapshots and for a total of 300 epochs. As for our previous test cases, we use the 0.1-leakyReLU activation at the internal layers and we optimize the weights by minimizing the mean squared $L^{2}$-error: to do so, we rely on the L-BFGS optimizer, with default learning rate and no mini-batching. Note that, as for Section \ref{subsec:stokes}, here the supports of the mesh-informed layers are computed following the geodesic distance (stepsize of the auxiliary coarse mesh: $h^{*}=0.107$). However, differently from the previous test case, we only employ mesh-informed layers. This is because, intuitively, we expect the phenomenon under study to be mostly local in nature.
\\\\
After training, our architecture reported an average $L^{2}$-error of 7.40\% (computed on 100 randomly sampled unseen instances). Figure \ref{fig:brain} shows the initial condition of the brain for a given realization of the random field $\varphi$, together with its high-fidelity approximation for $\tau$ and the corresponding MINN prediction. Once again, we see a good agreement between the actual ground truth and the neural network output. For instance, the proposed MINN model succeeds in recognizing a fundamental feature of the healing process, that is: despite reporting an equivalent damage, two different regions of the domain may require completely different times for their recovery; in particular, those regions that are more isolated will take longer to heal.
\begin{figure}
    \centering
    \includegraphics[width=\textwidth]{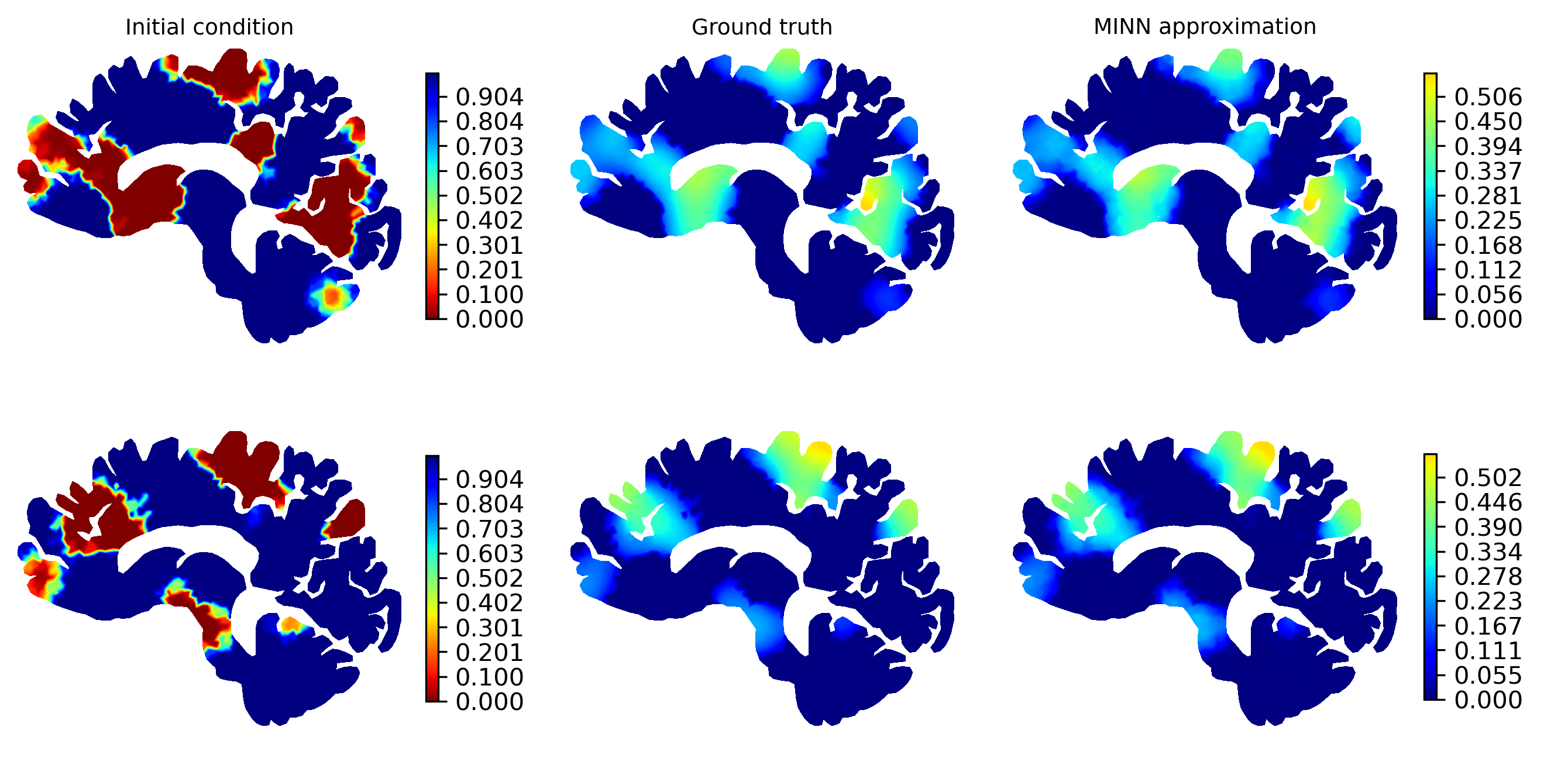}
    \caption{\newstuff{Initial condition (right), ground truth of the time-to-recovery map (center) and corresponding MINN approximation (right) for an unseen instance of the \textit{brain damage recovery} example, Section \ref{subsec:brain}. Note: a different colorbar is used for the initial condition and the time-to-recovery map, respectively.}}
    \label{fig:brain}
\end{figure}
}

\newstuff{\section{Numerical experiments: comparison with DeepONets and Fourier Neural Operators}
\label{sec:experiments4}
\renewcommand{\arraystretch}{1.5}
Our purpose for the current Section is to compare the performances of MINN architectures with those of two other popular Deep Learning approaches to operator learning, namely DeepONets and Fourier Neural Operators (FNO). To start with, we compare the three methodologies on a benchmark case study, where the performances of DeepONets and FNO have already been reported \cite{donfno}. We then move on to consider two additional problems which, despite their simplicity, we find to be of remarkable interest. These concern a dynamical system with chaotic trajectories, and an advection-dominated problem, two notoriously challenging scenarios when it comes to surrogate and reduced order modeling. Here, we shall exploit these problems to showcase the possible advantages that MINNs may offer with respect to the state-of-the-art.
\\\\
Here, for the sake of simplicity, we shall restrict our analysis to either 1D domains or 2D squares, which we always discretize through uniform grids. This is to ensure a fair comparison between MINNs and other approaches, such as FNOs, whose generalization to more complex geometries is still in the progress of being (see, e.g., the recently proposed Laplace Neural Operators, \cite{lno2,lno1}). We report our results in the subsections below. Before that, however, it may be worth recalling the fundamental ideas behind DeepONets and FNOs. 
\\\\
Let $\operator_{h}:\Theta\to V_{h}$ be our operator of interest, where $\Theta$ is some input space, while $V_{h}\cong\mathbb{R}^{N_{h}}$ is a given Finite Element space of dimension $N_{h}$, here consisting, for simplicity, of piecewise linear polynomials. 
As we mentioned in Section \ref{subsec:relationship}, DeepONets are grounded on a representation of the form $$(\mathcal{G}_{h}f)(\x)\approx\Psi(f)\cdot \phi(\x),$$
where $\Psi: \Theta\to \mathbb{R}^{m}$  and $\phi: \Omega\to\mathbb{R}^{m}$ are the branch and the trunk nets, respectively. Let now $\{\x_{j}\}_{j=1}^{N_{h}}$ be the nodes of the mesh at output, and let $\mathbf{V}\in\mathbb{R}^{N_{h}\times m}$ be the matrix
\begin{equation}
    \mathbf{V}:=\left[\begin{array}{ccccc}
    \phi_{1}(\x_{1}) &\;\;& \dots &\;\;&\phi_{m}(\x_{1})\\
    \vdots &&&&\vdots\\
    \phi_{1}(\x_{N_{h}}) &\;\;& \dots &\;\;&\phi_{m}(\x_{N_{h}})
    \end{array}\right],
\end{equation}
where $\phi_{i}(\x)$ denotes the $i$th component of $\phi(\x)$. We note that the map
\begin{equation}
    \Theta\ni f\to\mathbf{V}\cdot\Psi(f)\in\mathbb{R}^{N_{h}}\cong V_{h},
\end{equation}
formally returns the DeepONet approximation across the overall output mesh. In particular, this opens the possibility of directly replacing the trunk net with a suitable projection matrix $\mathbf{V}$. As a matter of fact, this is the strategy proposed by Lu et al. in \cite{donfno}, where the authors exploit Proper Orthogonal Decomposition (POD) \cite{negri} to compute the projection matrix $\mathbf{V}$ in an empirically optimal way. This technique, which the authors call POD-DeepONet, is usually better performing than the vanilla implementation of DeepONet, but it requires fixing the mesh discretization at output. Since the latter condition is always true within our framework, we shall restrict our attention to POD-DeepONet for our benchmark analysis. 
\\\\
For what concerns FNOs instead, the fundamental building block of these architectures is the \textit{Fourier layer}. Similarly to convolutional layers, these architectures are defined to accept inputs with multiple \textit{channels} or features. In our setting, for a given feature dimension $c$, a Fourier layer is a map $L:(V_{h})^{c}\to (V_{h})^{c}$ of the form
\newcommand{\vbold}{\boldsymbol{v}}
\begin{equation}
    L(\vbold)=\rho\left(\mathcal{F}^{-1}\left(\mathbf{R}\cdot\mathcal{F}(\vbold)\right)+W\vbold\right)
\end{equation}
where
\begin{itemize}
    \item[$\bullet$] $\mathcal{F}:(V_{h})^{c}\to\mathbb{R}^{m\times c}$ is the truncated Fourier transform, which takes the $c$-signals at input, computes the Fourier transform of each of them and only keeps the first $m$ coefficients;
    \item[$\bullet$] $\mathcal{F}^{-1}:\mathbb{R}^{m\times c}\to (V_{h})^{c}$ is the inverse Fourier transform, defined for each channel separately;
    \item[$\bullet$] $\mathbf{R}=(r_{i,j,k})_{i,j,k}\in\mathbb{R}^{m\times m\times c}$ is a learnable tensor that performs a linear transformation in the Fourier space. For any given input $\mathbf{A}=(a_{j,k})_{j,k}\in\mathbb{R}^{m\times c}$, the action of the latter is given by
$$\mathbf{R}\cdot\mathbf{A}:=\left(\sum_{j=1}^{m}r_{i,j,k}a_{j,k}\right)_{i,k}\in\mathbb{R}^{m\times c};$$
    \item[$\bullet$] $W:V_{h}\to V_{h}$ is a linear map whose action is given by 
    $$(W\vbold)(\x):=\mathbf{W}\cdot \vbold(\x),$$
    where $\mathbf{W}\in\mathbb{R}^{c\times c}$ is a learnable weight matrix;
    \item[$\bullet$] $\rho$ is the activation function.
\end{itemize}
In practice, when approximating a given operator $\operator_{h}:\Theta\to V_{h}$, the implementation of a complete FNO architecture is often of the form 
\begin{equation}
    P\circ L_{l}\circ\dots\circ L_{1}\circ E \circ \tilde{\Phi}:\quad\quad\Theta\to V_{h}\to(V_{h})^{c}\to\dots\to (V_{h})^{c}\to V_{h}.
\end{equation}
Here, $\Phi$ is a preliminary block, possibly consisting of dense layers, that maps the input form $\Theta$ to $V_{h}$. The FNO that exploits a lifting layer, $E$, to increase the number of channels, from $V_{h}$ to $(V_{h})^{c}$. Such lifting is typically performed by composing the signal at input with a shallow (dense) network, $e:\mathbb{R}\to\mathbb{R}^{c}$, so that $E:v(\x)\to e(v(\x))$. The model then implements $l$ Fourier layers, which constitute the core of the FNO architecture. Finally, a projection layer is used to map the output back from $(V_{h})^{c}$ to $V_{h}$. As for the lifting operator, this is achieved through the introduction of a suitable projection network, $p:\mathbb{R}\to\mathbb{R}^{c}$, acting node-wise, i.e. $P:\vbold(\x)\to p(\vbold(\x))$.
\subsection{Benchmark problem: diffusion equation with random coefficients}
\label{subsec:darcy}
To start, we consider a test case where the performances of DeepONets and FNOs have already been reported, see, respectively \cite{donfno} and \cite{fnos}. Let $\Omega=(0,1)^{2}$. We wish to learn the solution operator $\operator:L^{\infty}(\Omega)\to L^{2}(\Omega)$ of the boundary value problem below
\begin{equation}
    \label{eq:darcy}
    \begin{cases}
    -\nabla\cdot(a\nabla u) = 1 & \text{in}\;\Omega\\
    u = 0 & \text{on}\;\partial\Omega,
    \end{cases}
\end{equation}
where $\operator: a\to u$. As usual, we cast our learning problem in a probabilistic setting by endowing the input space, $L^{\infty}(\Omega)$, with a suitable probability distribution $\mathbb{P}$. In short, the latter is built so that $a$ satisfies
$$\mathbb{P}\left(a(\x)\in\{3,12\}\;\forall \x\in\Omega\right)=1,$$
meaning that the PDE is characterized by a random permeability field with piecewise-constant realizations. For a more accurate description about the construction of such $\mathbb{P}$, we refer to \cite{fnos,donfno}.
\acapo
Following the same lines of \cite{donfno}, we discretize the spatial domain with a uniform 29x29 grid, which we can equivalently think of as a triangular rectangular mesh, $\mesh=\{\x_{i}\}_{i=1}^{841}$, of uniform stepsize $h=\sqrt{2}/28\approx0.05.$ Both for training and testing, we refer to the dataset made available by the authors in \cite{donfno}, which consists of 1000 training snapshots and 200 testing instances, respectively. As detailed in \cite{fnos}, the latter were obtained by repeatedly solving Problem \eqref{eq:darcy} via finite-differences. Consequently, for each random realization of the permeability field, we have access to the nodal values $\mathbf{a}=[a(\x)]_{\x\in\mesh}\in\mathbb{R}^{N_{h}}$ of the random field, and the corresponding approximations of the PDE solution, $\mathbf{u}\approx[u(\x)]_{\x\in\mesh}\in\mathbb{R}^{N_{h}}$, where $N_{h}=841.$ In this sense, we can think of the discrete operator as $\operator_{h}:V_{h}\to V_{h}$, with $V_{h}=X_{h}^{1}$, even though it would be more natural to have $X_{h}^{0}$ as input space to account for the discontinuities in $a$.
\acapo
It is also in light of these considerations that we propose to learn to operator with the MINN architecture below,
\begin{equation}
    \label{eq:darcyminn}
    X_{h}^{1}
    \xrightarrow{\;r=0.1\;}
    X_{4h}^{0}
    \xrightarrow{\;r=0.4\;}
    X_{4h}^{0}
    \xrightarrow{\;r=0.2\;}
    X_{h}^{1}    
    \xrightarrow{\;r=0.2\;}
    X_{h}^{1}
    \xrightarrow{\;r=0.1\;}
    X_{h}^{1}
    \xrightarrow{\;r=0.2\;}
    X_{h}^{1}.
\end{equation}
The idea is that the first two layers should act as a preprocessing of the input. In particular, aside from the dimensionality reduction, they serve the additional purpose of recasting the original signal over the space of P0 Finite Elements, which we find to be better suited for representing permeability fields.
\acapo
This time, we employ the 0.3-leakyReLU activation for all the layers, including the last one. This allows us to enforce, at least 
in a relaxed fashion, the fact that any PDE solution to \eqref{eq:darcy} should be nonnegative. We train our model for 300 epochs by minimizing the mean squared $L^{2}$-error through the L-BFGS optimizer (no batching, default learning rate), which on our GPU takes only 1 minute an 56 seconds.
\\\\
After training, our model reports an average $L^{2}$-error of 2.78\%, which we find to be satisfactory as it compares very well with both DeepONets and FNOs (cf. Table \ref{tab:luflow}). In general, as it can be observed from Figure \ref{fig:luflow}, the proposed MINN architecture can reproduce the overall behavior of the PDE solutions fairly well, but it fails in capturing some of their local properties. Considering that our model is twice as accurate over the training snapshots than it is over the test data (average relative error: 1.38\% vs 2.78\%), this might be due to a reduced amount of training instances.
\acapo
Nonetheless, our performances remain comparable with those achieved by the state-of-art. Here, DeepONets attain the best accuracy thanks to their direct usage of the POD projection, which is known to be particularly effective for elliptic problems \cite{negri}. Indeed, as shown in \cite{donfno}, the same approach yields an average $L^{2}$-error of 2.91\% if one replaces the POD basis with a classical trunk-net architecture. FNOs, instead, report the worst performance, by their accuracy can be easily increased if one exploits suitable strategies such as \textit{output normalization}. The latter consists in the construction of a surrogate model of the form
$$\operator_{h}(a)\approx \sigma\tilde{\Phi}(a)+\bar{u},$$
where $\tilde{\Phi}$ is an FNO architecture to be learned, while $$\bar{u}=\bar{u}(\x)\approx\mathbb{E}[u]\quad\quad\text{and}\quad\quad\sigma^{2}=\sigma^{2}(\x)\approx\mathbb{E}|u-\mathbb{E}[u]|^{2}$$ are the pointwise average and variance of the solution field, respectively (both to be estimated directly from the training data). Then, this trick allows one to obtain a better FNO surrogate with a relative error of 2.41\% \cite{donfno}.
\begin{table}
    \centering
    \begin{tabular}{ l | l  l  l } 
    \hline
  & \textbf{MINN} & \textbf{FNO} & \textbf{POD-DeepONet} \\
  \hline\hline\rowcolor{Gray}
 $L^{2}$-relative error & 2.78\% & 4.83\% & 2.32\%\\\rowcolor{Gray2}
Layers & 6 Mesh-informed & 4 Fourier & 2 Dense\\\rowcolor{Gray2}
 &&2 Feature maps&2 Convolutional\\\rowcolor{Gray}
 Hyperparameters  &  $h_{\text{coarse}}=4h$ & $c=32$ & $m_{\text{POD}}=115$\\\rowcolor{Gray}
 &&$m=12$&\\
 \hline
    \end{tabular}
    \caption{\label{tab:luflow} 
\newstuff{Comparison of Mesh-Informed Neural Networks, DeepONets and Fourier Neural Operators (FNO), for the diffusion equation example, Section \ref{subsec:darcy}. All errors correspond to the average performance reported across the test set. For POD-DeepONet, the number of layers refers to the branch net only. $h$ = stepsize of the reference mesh, $h_{\text{coarse}}$ = stepsize of the coarser mesh, $c$ = no. of features for the Fourier layers, $m$ = no. of Fourier modes, $m_{\text{POD}}$ = no. of POD basis functions.}}
\end{table}
\begin{figure}
    \centering
    \includegraphics[width=0.9\textwidth]{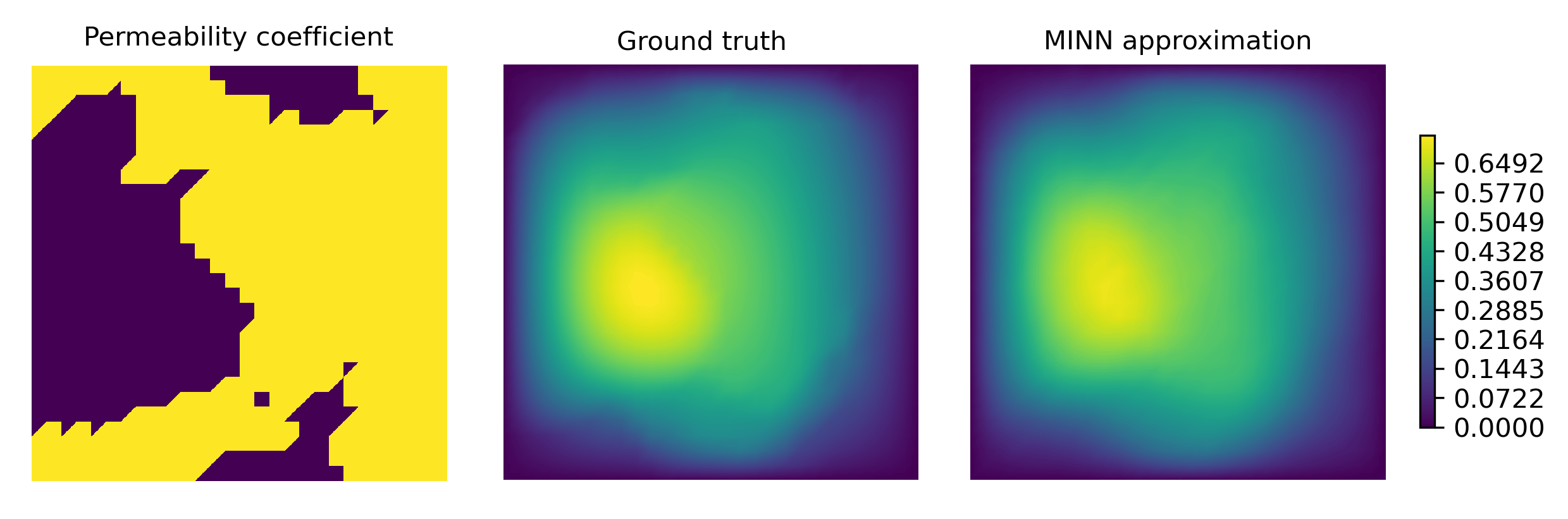}
    \caption{\newstuff{Input field, ground truth and MINN approximation for a test instance of Problem \eqref{eq:darcy}, Section \ref{subsec:darcy}.}}
    \label{fig:luflow}
\end{figure}
\subsection{Dealing with chaotic trajectories: the Kuramoto–Sivashinsky equation}
As a second example, we test the three methodologies in the presence of chaotic behaviors, here arising from the Kuramoto-Sivashinsky equation, a time-dependent nonlinear PDE that was first introduced to model thermal instabilities and flames propagation \cite{kuramoto}. More precisely, we consider the periodic boundary value problem below,
\label{subsec:kuramoto}
\begin{equation}
    \begin{cases}
    \label{eq:kuramoto}\displaystyle 
    \;\frac{\partial u}{\partial t} = 
    -\nu\frac{\partial^{4}u}{\partial x^{4}}
    -\frac{\partial^{2} u}{\partial x^{2}} - u\frac{\partial u}{\partial x} & \text{in}\;\mathbb{R}\times(0,T]\vspace{0.5em}\;
    \\\vspace{0.5em}\;
    u(x,t)=u(x+\ell,t) &  (x,t)\in\mathbb{R}\times(0,T]
    \\\vspace{0.5em}\;
    u(x,0)=u_{0}(x) &  x\in\mathbb{R}
    \end{cases}
\end{equation}
where $\ell=T=100$ and $u_{0}(x):=\pi+\cos(2\pi x/\ell)+0.1\cos(4\pi x/\ell)$ are given, while $\nu\in[1,3.5]$ is a parameter. We wish to learn the operator $\operator$ that maps $\nu$ onto the corresponding PDE solution $u=u(x,t)$. To this end, it is convenient to define the spacetime domain $\Omega:=(0,\ell)\times(0,T)$, which we equip with a uniform grid of dimension 100x100. As a high-fidelity reference, we then consider a spectral method combined with a modified Crank-Nicolson scheme for time integration ($\Delta t = 5\cdot10^{-4}$: trajectories are later subsampled to fit the uniform grid over $\Omega$). This allows us to define the discrete operator as $$\operator_{h}:\Theta\to V_{h},$$ with where $\Theta:=[1,3.5]$ and $V_{h}:=X^{1}_{h}(\Omega)$, so that $\dim(X^{1}_{h})=N_{h}=n_{h}^{2}=10'000.$ We use the numerical solver to compute a total of $N=1000$ PDE solutions, $\{(\nu_{i},u_{h}^{i})\}_{i=1}^{N}$, $N_{\text{train}}=500$ to be used for training and $N_{\text{test}}=500$ for testing.
\\\\
We train all the models according to the loss function below
$$\mathscr{L}(\Phi):=\frac{1}{N_{\text{train}}}\sum_{i=1}^{N_{\text{train}}}\left(\frac{1}{n_{h}^{2}}\sum_{j,k=1}^{n_{h}}|u_{h}^{i}(x_{j},t_{k})-\Phi(\nu_{i})(x_{j},t_{k})|^{2}\right)$$
which we minimize iteratively, for a total of 500 epochs, using the L-BFGS optimizer. Similarly, in order to emphasize the difference between the space and the time dimension, we evaluate the accuracy of the models in terms of the relative error below,
\begin{multline}
    \label{eq:l1l2err}
    \mathscr{E}(\Phi):=\\=\frac{1}{N_{\text{test}}}\sum_{i=N_{\text{test}}}^{N}\left(\frac{\frac{1}{n_{h}}\sum_{k=1}^{n_{h}}\sqrt{\frac{1}{n_{h}}\sum_{j=1}^{n_{h}}|u_{h}^{i}(x_{j},t_{k})-\Phi(\nu_{i})(x_{j},t_{k})|^{2}}}{\frac{1}{n_{h}}\sum_{k=1}^{n_{h}}\sqrt{\frac{1}{n_{h}}\sum_{j=1}^{n_{h}}|u_{h}^{i}(x_{j},t_{k})|^{2}}}\right)\approx\\\\
    \approx \frac{1}{N_{\text{test}}}\sum_{i=N_{\text{test}}}^{N}\left(
    \frac{\|u_{h}^{i}-\Phi(\nu_{i})\|_{L^{1}((0,T);\; L^{2}(0,L))}}{\|u_{h}^{i}\|_{L^{1}((0,T);\;L^{2}(0,L))}}\right),
\end{multline}
where norms are intended in the Bochner sense \cite{evans}. For the implementation of the three approaches, we proceed as follows:
\begin{itemize}
    \item [i)] POD-DeepONet: we exploit the training data to construct a POD basis $\mathbf{V}$ consisting of $m=200$ modes, as those should be sufficient for capturing most of the variability in the solution space. 
    We then construct the branch net as a classical DNN from $\mathbb{R}\to\mathbb{R}^{m}$, with 3 hidden layers of width 500, each implementing the 0.1-leakyReLU activation;\vspace{0.5em}
    \item [ii)] FNO: we use a combined architecture with a dense block at the beginning and three Fourier layers at the end. The dense block consists of 2 hidden layers of width 500, and an output layer with $N_{h}$ neurons, all complemented with the 0.1-leakyReLU activation. Then, the model is followed by three Fourier layers, which, for simplicity, are identical in structure. In particular, following the same rule of thumb proposed by the authors \cite{fnos}, we implement a Fourier block with $c=32$ features and $m=12$ Fourier modes per layer. Once again, all layers (except for the last one) use the 0.1-leakyReLU as nonlinearity;\vspace{0.5em}
    \item [iii)] MINN: we propose the following architecture, obtained through a combination of dense and mesh-informed layers
    \begin{equation*}
    \mathbb{R}\to\mathbb{R}^{50}\to V_{3h}\xrightarrow{\;r=20\;} V_{h}\xrightarrow{\;r=6\;}V_{h},\end{equation*}
    where we recall that the stepsize of the (finer) spacetime mesh is $h=\sqrt{2}$. This time, however, we do not use the Euclidean metric for the computation of the supports, but we rely on a different distance function that can account for the periodicity of the spatial component. In fact, Equation \eqref{eq:kuramoto} is better understood in the periodic domain $\tilde{\Omega}:=(\mathbb{R}/\ell\mathbb{Z})\times[0,T]$, than it is over $\Omega$. Here, $\mathbb{R}/\ell\mathbb{Z}$ is the quotient of $\mathbb{R}$ with respect to the equivalence relation below,
    $$x\sim x'\iff (x-x')\ell^{-1} \in \mathbb{Z}.$$  
    To account for this, we compute the supports of the mesh-informed layers according to the following distance function $\geodistance:\Omega\times\Omega\to[0,+\infty)$,
    $$\geodistance((x,t),(x',t')):=\sqrt{\left(\frac{\ell}{2}-\left|\frac{\ell}{2}-|x-x'|\right|\right)^{2}+(t-t')^{2}}.$$
    This ensures the wished behavior over the unwrapped domain $\Omega$. In fact, for instance, one has $$\geodistance((1,0),(3,0))=\geodistance((1,0),(99,0)),$$
    as we recall that $\ell=100.$
\end{itemize}
\;\\
After training, all model surrogates report relative errors below 10\%, with MINNs achieving the best performance, cf. Table \ref{tab:ksh}. When tested over parametric instances outside of the training set, the three approaches propose similar but different predictions, cf. Figure \ref{fig:ksh}. The FNO-surrogate manages to capture the macroscopic features of the solution, but returns a much noiser output. Conversely, approximation proposed by POD-DeepONet is cleaner and better resembles the original ground truth. Still, even the latter model fails to capture some of the local features of the PDE solution, which, in contrast, the MINN architecture is able to recover. The interested reader may also find additional insights in Appendix A.
\acapo
Nonetheless, we must acknowledge that our analysis might be affected by the several design choices that we had to make for the architectures. In fact, even though we did our best in implementing and tuning the hyperparameters for the three approaches, we cannot consider our results to be universal.
\begin{table}
    \centering
    \begin{tabular}{ l | l  l  l } 
    \hline
  & \textbf{MINN} & \textbf{FNO} & \textbf{POD-DeepONet} \\
  \hline\hline\rowcolor{Gray}
 $(L^{1}\;$of $L^{2})$-relative error & 5.87\% & 9.41\% & 6.97\%\\\rowcolor{Gray2}
Layers & 2 Mesh-informed & 3 Fourier & 4 Dense\\\rowcolor{Gray2}
 &2 Dense & 3 Dense & \\\rowcolor{Gray2}
 &&2 Feature maps&\\\rowcolor{Gray}
 Hyperparameters  &  $h_{\text{coarse}}=3h$ & $c=32$ & $m_{\text{POD}}=200$\\\rowcolor{Gray}
 &&$m=12$&\\
 \hline
    \end{tabular}
    \caption{\label{tab:ksh} 
\newstuff{Models comparison for the Kuramoto Sivashinsky example, Section \ref{subsec:kuramoto}. Error are computed according to Eq. \eqref{eq:l1l2err}. Table entries for \textit{Layers} and \textit{Hyperparameters} read as in Table \ref{tab:luflow}}.}
\end{table}
\begin{figure}
    \centering
    \includegraphics[width=\textwidth]{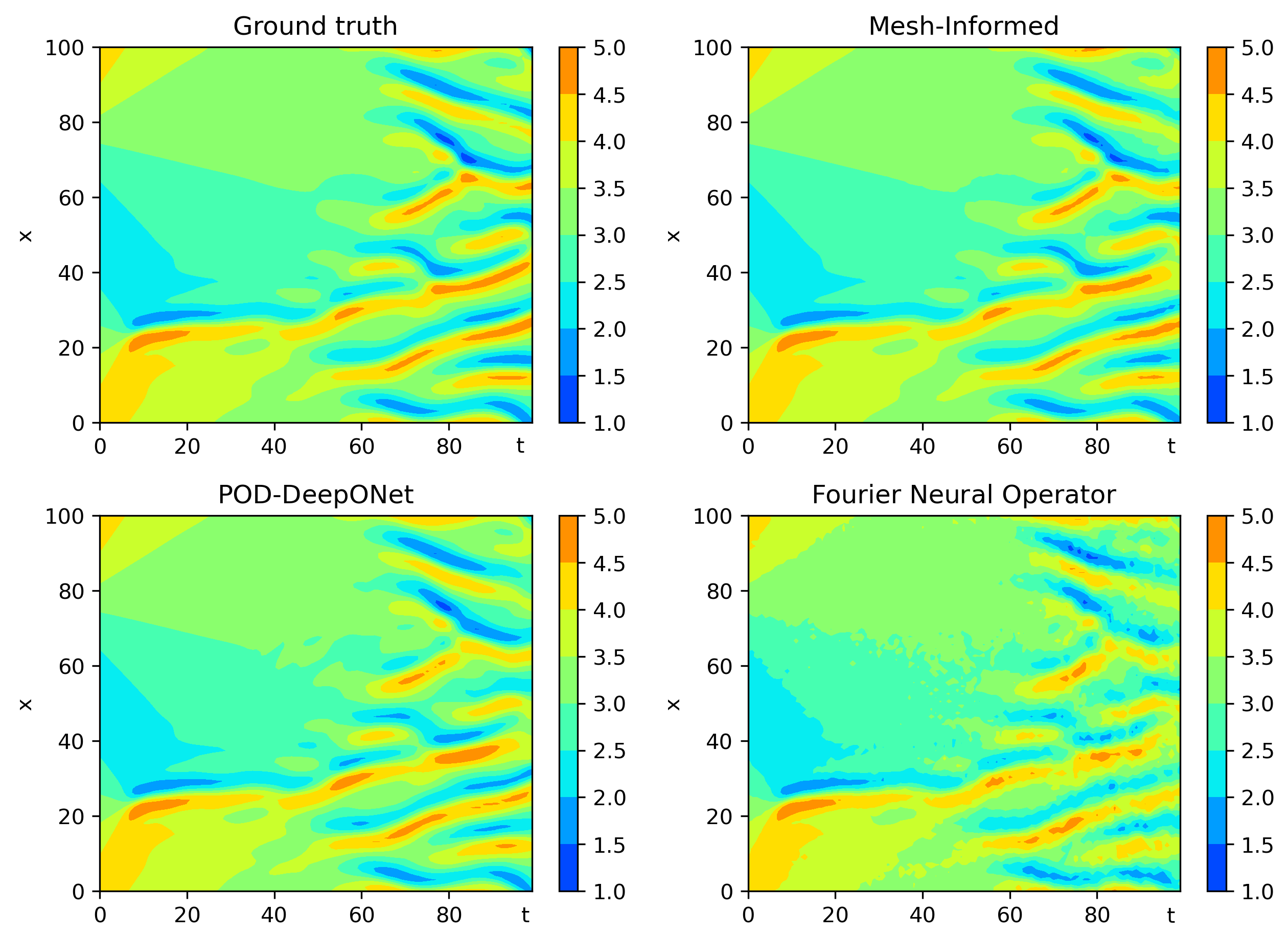}
    \caption{\newstuff{Ground truth and corresponding approximations via MINN, POD-DeepONet and FNO, for a test instance of the Kuramoto-Sivashinsky example, Section \ref{subsec:kuramoto}. The solutions are plotted over the spacetime domain $\Omega=(0,\ell)\times(0,T).$}}
    \label{fig:ksh}
\end{figure}
\subsection{Dealing with the Kolmogorov barrier: the advection equation}
\label{subsec:kolmogorov}
As a last example, we consider a situation in which the problem at hand exhibits a slow decay of the \textit{Kolmogorov n-width}. The latter is a quantity of particular interest in the areas of surrogate and Reduced Order Modeling, as it measures to which extent a given phenomenon can be described via linear superposition of suitable modes. More precisely, given a compact subset $S$ of a normed state space $(V,\|\cdot\|)$, the Kolmogorov $n$-width of $S$ is defined as
$$d_{n}(S):=\inf_{\substack{H\subseteq V\\\dim(H)=n}}\;\sup_{u\in S}\;\inf_{v\in H}\|u-v\|.$$
For instance, if $d_{n}(S)<\varepsilon$ for some $n\in\mathbb{N}$, then there will be $n$ modes, $v_{1},\dots,v_{n}\in V$, capable of representing all the elements in $S$ with an error that is smaller than $\varepsilon$. Viceversa, if $d_{n}(S)$ decays slowly with $n$, then one is forced to consider a larger number of modes to obtain a reasonable approximation. As we shall discuss in a moment, this fact can have a huge impact for operator learning problems, especially for those approaches based on linear projection techniques, such as DeepONets.
\acapo
To see this, consider a continuous operator $\operator:\Theta\to L^{2}(\Omega)$, with $\Theta\subset\mathbb{R}^{p}$ a compact subset and $\Omega$ a bounded domain. Let $S:=\operator(\Theta)$ be the image of $\Theta$ through the operator $\operator.$ Then, it is straightforward to see that
\begin{equation}
\label{eq:iff}
d_{n}(S)<\varepsilon\iff\end{equation}
$$
\exists \{v_{i}\}_{i=1}^{n}\subset L^{2}(\Omega),\;\{\phi_{i}\}_{i=1}^{n}\subset\mathcal{C}(\Theta)\;\;\text{s.t.}\;\;\sup_{\mup\in\Theta}\left\|\operator(\mup)-\sum_{i=1}^{n}\phi_{i}(\mup)v_{i}\right\|_{L^{2}(\Omega)}<\varepsilon,$$
furthermore, since the projection coefficients are optimal on any given (orthonormal) basis, it is not restrictive to set $\phi_{i}(\mup):=\langle \operator(\mup),v_{i}\rangle_{L^{2}(\Omega)}.$ The equivalence in \eqref{eq:iff} shows that the effectiveness of a separation of variables approach, such as the one adopted by DeepONets and POD-DeepONets, is confined by the behavior of the Kolmogorov $n$-width. In particular, these approaches may encounter some difficulties if $d_{n}(S)$ happens to decay slowly.
\\\\
In light of these considerations, we propose a final case study based on the advection equation, which, despite its simplicity, constitutes a prototypical example of slow decay in the Kolmogorov $n$-width. More precisely, let us consider the evolution equation below
\begin{equation}
\label{eq:adve}
\begin{cases}
    \displaystyle 
    \frac{\partial u}{\partial t} +
    c\frac{\partial u}{\partial x} = 0 & \text{in}\;\Omega\times(0,T]\\\\
    u(0,t)=u_{0}(-ct) & t\in(0,T]\\
    u(x,0)=u_{0}(x) & x\in\Omega
\end{cases}
\end{equation}
where $\Omega=(0,1)$, $T=1$ and
$$u_{0}(x):=\frac{1}{\sqrt{2\pi\sigma^{2}}}e^{-\frac{1}{2\sigma^{2}}x^{2}},$$with $\sigma=0.005$, 
are given, while the wave velocity, $c$, is regarded as a model parameter. Our goal is to approximate the operator $$\operator:c\to u(\cdot, T),$$ that maps the wave speed to the terminal state of the system. Equivalently, we may write
$(\operator c)(x)=u(x-c)$,
as the solution to \eqref{eq:adve} is known in closed form. We exploit the latter formula to generate a collection of 1000 random snapshots, where the input parameter is sampled uniformly from $\Theta:=[0.05,\;0.95]$. We keep 750 of such snapshots for training a leave the remaining 250 for testing. For the implementation of the three Deep Learning approaches we proceed as follows:
\begin{itemize}
    \item [i)] POD-DeepONet: we set up two possible architectures. For the first one, we use 50 POD modes and a DNN with 2 hidden layers of width 100 as branch net. As a second architecture, instead, we consider a similar model where the number of POD basis is increased to 200;
    \\
    \item [ii)] FNO: we construct a model by concatenating a shallow network of width 100, which maps $\mathbb{R}\to\mathbb{R}^{N_{h}}$, with a Fourier block composed of two Fourier layers. We design the latter along the same lines adopted by the authors in \cite{fnos}, i.e. by letting the number of Fourier modes be equal to 16 and by introducing a total of 64 hidden features.
    \\
    \item [iii)] MINN: we use the hybrid architecture below
$$\mathbb{R}\to\mathbb{R}^{100}\to\mathbb{R}^{100}\to V_{20h}\xrightarrow{\;r=0.1\;} V_{4h}\xrightarrow{\;r=0.01\;} V_{h}.$$    
\end{itemize}
As usual, we equip the internal layers of all our architectures with the 0.1-leakyReLU activation. Similarly, we train the three models by minimizing the mean squared $L^{2}$-error, as we did for all our previous experiments. This time, we rely on the Adam optimizer for the minimization of the loss function, which we run for a total of 50'000 epochs, starting with a learning rate of $10^{-3}$ and halving it every 10'000 iterations. Here, in fact, for all of the approaches we found the Adam optimizer to yield better results with respect to L-BFGS.
\\\\
Results are in Table \ref{tab:adve}, Figure \ref{fig:adve} and Appendix A. Unsurprisingly, the two POD-DeepONets achieve the highest errors, which is to be expected considering the nature of the problem itself. In the first case, this is due to the poorness of the POD basis, which, with as little as 50 modes, is unable to capture the overall phenomenon. In the second case, instead, this is caused by large number of POD coefficients to be learned by the branch net. In fact, in our analysis, the largest POD basis (200 modes) reached an average projection error below 0.01\%: therefore, here, the branch net is the only one responsible for the bad performance of the model. 
\acapo 
Conversely, things improve a lot if we move to fully nonlinear methods, such as FNOs and MINNs. For instance, FNO is at least four times more accurate than POD-DeepONet, with an average relative error of 11.08\%. Still, its performance is not comparable with the one achieved by our MINN surrogate (relative error: 4.18\%). This difference may be further appreciated by the plots reported in Figure \ref{fig:adve}: while all the models predict the location of the wave correctly, only the MINN surrogate manages to capture both the shape and the magnitude of the signal; in contrast the approximations proposed by FNO and POD-DeepONet present spurious oscillations and are either noisy or off-scale. This goes to show that even an apparently simple phenomenon can give rise to highly nontrivial complications. 
\begin{table}
    \centering
    \begin{tabular}{ l | l  l  l} 
    \hline
  & \textbf{MINN} & \textbf{FNO} & \textbf{POD-DeepONet} \\
  \hline\hline\rowcolor{Gray}
 $L^{2}$-relative error & 4.18\% & 11.08\% & 42.63\% (50 basis)\\\rowcolor{Gray}
 &  &  & 45.87\% (200 basis)\\\rowcolor{Gray2}
Layers & 2 Mesh-informed & 3 Fourier & 4 Dense\\\rowcolor{Gray2}
 &3 Dense & 3 Dense & \\\rowcolor{Gray2}
 &&2 Feature maps&\\\rowcolor{Gray}
 Hyperparameters  &  $h_{\text{coarse}}=20h$ & $c=64$ & $m_{\text{POD}}=50-200$\\\rowcolor{Gray}
 &$h_{\text{inter}}=4h$&$m=16$&\\
 \hline
    \end{tabular}
    \caption{\label{tab:adve} 
\newstuff{Test errors for the advection equation, Section \ref{subsec:kolmogorov}. Table entries as in Table \ref{tab:luflow}. Here, $h_{\text{inter}}$ = intermediate resolution.}}
\end{table}
\begin{figure}
    \centering
    \includegraphics[width=\textwidth]{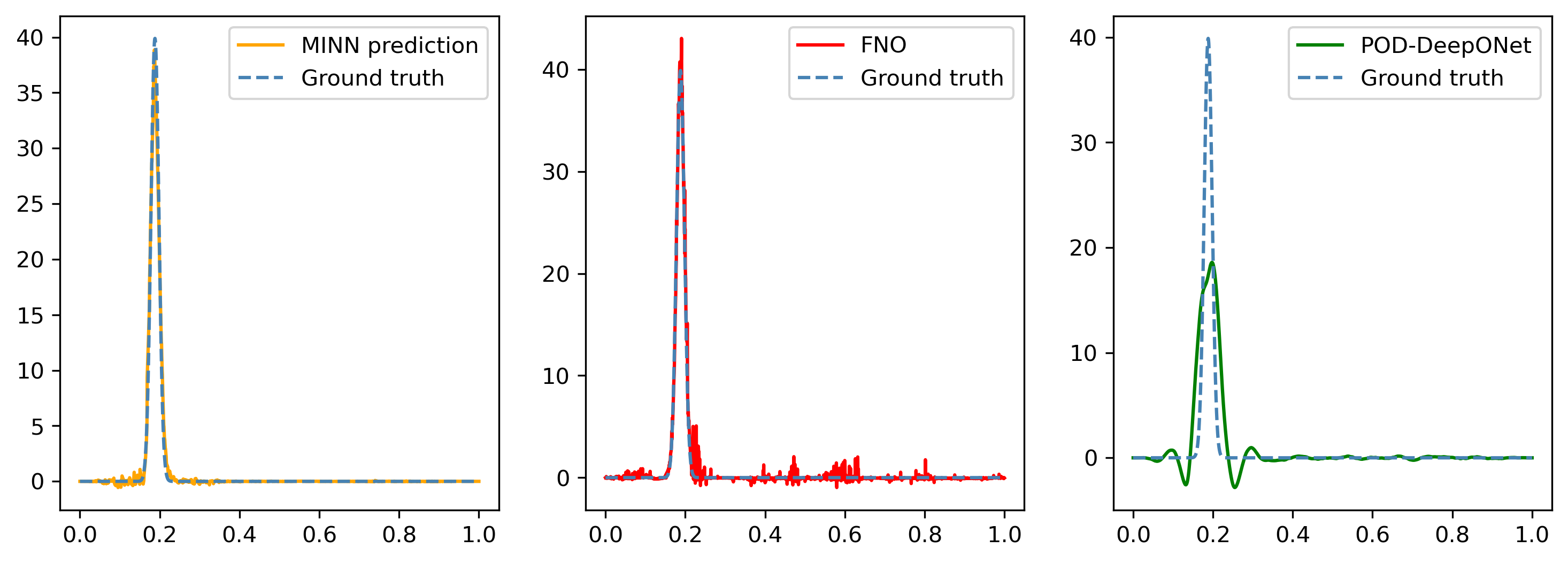}
    \caption{\newstuff{Model predictions of MINN (left), FNO (center) and POD-DeepONet (right) for an unobserved parametric instance of the advection equation problem, Section \ref{subsec:kolmogorov}. For POD-DeepONet, only the output of the best model is reported (50 basis).}}
    \label{fig:adve}
\end{figure}
}

\section{An application to Uncertainty Quantification}
\label{sec:uq}
We finally consider an application to Uncertainty Quantification (UQ) involving a partial differential equation. UQ is an essential aspect of robust modelling, which often involves expensive numerical and statistical routines. In this Section, we provide an example on how MINNs can alleviate these costs by serving as model surrogates in the computational pipeline. In particular, starting from a suitable PDE model, we address a problem concerning oxygen transfer in biological tissues. 

\subsection{Model description}
Oxygen is a fundamental constituent of most biological processes. In humans, oxygen is delivered by the circulatory system from the lungs to the rest of the body. At the small scales, cells receive oxygen from the vascular network of capillaries that spread all over the body. An efficient oxygen transfer is fundamental to ensure a healthy micro-environment and abnormal values in oxygen concentration are often associated to pathological scenarios. \newstuff{In particular, \textit{hypoxia}, that is the shortage of oxygen supplies,} plays an important role in the development and treatment of tumors. It has been shown that hypoxic tissue opposes a resistance to chemotherapy and radiotherapy \cite{cattaneo,possenti}. These issues are caused by perturbed properties of the tumor blood vessels in terms of morphology and phenotype. Here, we aim at developing a methodology to assess the role of vascular morphology on tissue hypoxia. More precisely, we wish to address the following question: how does the topology of the vascular network relate to the size of the tissue under hypoxia? We answer this question in the simplified setting that we describe below.
\begin{figure}
\label{fig:vessels}
\centering
\includegraphics[width=0.7\textwidth]{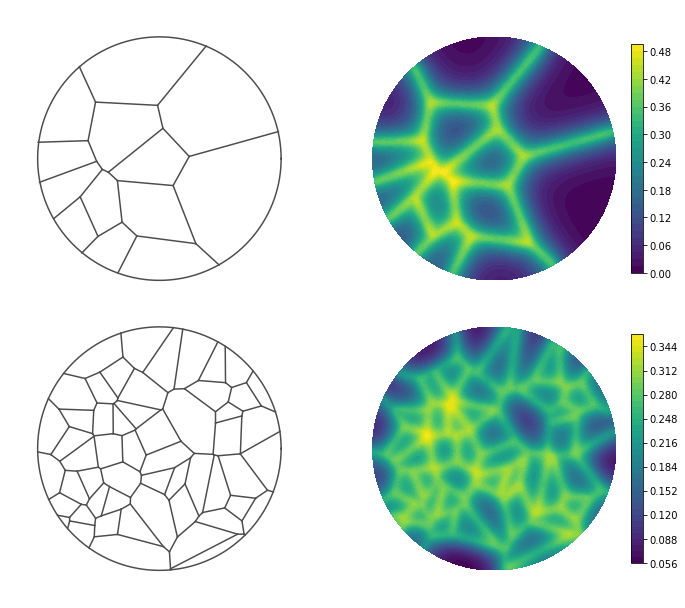}
\caption{Forward UQ problem (Section \ref{sec:uq}). Topology of the microvascular network $\Lambda$ (left) and corresponding oxygen distribution in the tissue $u=u_{\Lambda}$ (right). The top and the bottom rows corresponds respectively to a poorly and a highly vascularized tissue (resp. $\lambda = 1$ and $\lambda = 3$). Globally, the two networks provide the same amount of oxygen (cf. Equation \ref{eq:oxy}), but their topology significantly affects the values of $u$ in the tissue. In the first case (top row), nearly 31\% of the tissue has an oxygen level below the threshold value $u_{*}:=0.1$. Conversely, only 3\% of the tissue reports a low oxygen concentration in the second example.}
\end{figure}
\\\\
Within an idealized setting, we consider a portion of a vascularized tissue $\Omega:=\{\x\in\mathbb{R}^{2}:|x|<1\}$. Let $\Lambda\subset\overline{\Omega}$ be a graph representing the vascular network of capillaries (cf. Figure 7) and let $u:\Omega\to[0,1]$ be the oxygen concentration in the tissue, normalized to the unit value. We model the oxygen transfer from the network to the tissue with the following equations,
$$\begin{cases}
    \label{eq:microstrong}
    -\alpha\Delta u + u = (1-u)\delta_{\Lambda} & \text{in }\Omega\\
    -\alpha\nabla u \cdot \mathbf{n} = \beta u & \text{on }\partial\Omega
\end{cases}$$
where $\alpha=0.1$ and $\beta=0.01$ are respectively a fixed diffusion and resistance coefficient, while $\delta_{\Lambda}$ is the unique singular measure for which
$$\int_{\Omega}v(\x)\delta_{\Lambda}(d\x) = \frac{1}{|\Lambda|}\int_{\Lambda}v(\s)d\s$$
for all $v\in\mathcal{C}(\overline{\Omega})$. Here, we denote by $|\Lambda|:=\int_{\Lambda}1d\s$ the total length of the vascular graph. 
The first equation in \eqref{eq:microstrong} describes the diffusion and consumption of oxygen, balanced accordingly to the amount released from the vascular network on the right hand side. Finally, the model is closed using resistance boundary conditions of Robin type.

We understand \eqref{eq:microstrong} in the weak sense, meaning that define $u=u_{\Lambda}$ as the unique solution to the problem below
\begin{multline}
    \label{eq:oxy}
    \int_{\Omega}\alpha\nabla u(\x) \cdot \nabla v(\x)d\x + \int_{\Omega}u(\x)v(\x)d\x
    + \int_{\partial\Omega}\beta u(\s)v(\s)d\s = \\ = \frac{1}{|\Lambda|}\int_{\Lambda}(1-u(\s))v(\s)d\s
\end{multline}

\noindent\newline where the above is to be satisfied for all $v\in H^{1}(\Omega)$.

\subsection{Uncertainty quantification setting}
As we mentioned previously, we are interested in the relationship between $\Lambda$ and $u$. To this end, we introduce the parameter space
$$\Theta:=\{\Lambda\subset\overline{\Omega}:\;\Lambda\text{ is the union of finitely many segments}\}$$
which consists of all vascular networks. Note that, due to the normalizing factor $1/|\Lambda|$ in \eqref{eq:oxy}, all the vascular networks actually provide the same global amount of oxygen. However, as we will see later on, only those vascular graphs that are sufficiently spread across the domain can ensure a proper oxygen supply to the whole tissue (cf. Figure 7). In other words, we explore the influence of the distribution of the network, rather than its density, on the oxygen level.

The next subsection is devoted to prescribing a suitable discretization of \eqref{eq:oxy} to work with, and to introduce a class of probability measures $\{\mathbb{P}_{\lambda}\}_{\lambda}$ defined over $\Theta$. The idea is the following. We will use a macro-scale parameter $\lambda$ to describe the general perfusion of the tissue. Higher values of $\lambda$ will correspond to a highly vascularized tissue. This means that the topology of the vascular network will still be uncertain, but the corresponding probability distribution $\mathbb{P}_{\lambda}$ will favor dense graphs. Conversely, lower values of $\lambda$ will describe scenarios where capillaries are more sparse (see Figure 7, top vs bottom row). This will then bring us to consider the family of random variables
\begin{equation}
    \label{eq:qoi}
    Q_{\lambda}:=\frac{1}{|\Omega|}|\{u_{\Lambda} < 0.1\}| \text{ with } \Lambda\sim\mathbb{P}_{\lambda},
\end{equation}
that measure the portion of the tissue under the oxygen threshold 0.1, which we take as the value under which hypoxia takes place. Our interest will be to estimate the probability density function of each $Q_{\lambda}$ and to provide a robust approximation of their expected value $\mathbb{E}\left[Q_{\lambda}\right]$. While these tasks can be achieved using classical Monte Carlo, the computational cost is enormous as it implies solving equation \eqref{eq:oxy} repeatedly. To alleviate this burden, we will replace the original PDE solver with a suitable MINN architecture trained to learn the map $\Lambda\to u_{\Lambda}$.

\subsection{Discretization and implementation details}
For the random generation of vascular networks we exploit Voronoi diagrams \cite{voronoi}. Let $\mathcal{P}:=\{P\subset\Omega\;|\;P\text{ finite}\}$ be the collection of all points tuples in $\Omega$. To any $P\in\mathcal{P}$, we associate the vascular graph $\Lambda(P)$ defined by the edges of the Voronoi cells generated by $P$. In this way, we obtain a correspondence $\mathcal{P}\to\Theta$ given by $P\to\Lambda(P)$, that we can exploit to prescribe probability measures over $\Theta$. To this end, let $\lambda>0$ and let $X_{\lambda}$ be a Poisson point process over $\Omega$ having a uniform intensity of $10\lambda$. We denote by $\tilde{\mathbb{P}}_{\lambda}$ the probability measure induced by $X_{\lambda}$ over $\mathcal{P}$. Then, we define $\mathbb{P}_{\lambda}:=\#\tilde{\mathbb{P}}_{\lambda}$ as the push-forward measure obtained via the action $P\to\Lambda(P)$. This ensures the wished behavior: higher values of $\lambda$ tend to generate more points in the domain and, consequently, denser graphs.

We now proceed to discretize the variational problem. As a first step, we note that the vascular graph $\Lambda$ is not given in terms of a parametrization, which makes it harder to compute integrals of the form $\int_{\Lambda}v(\s)d\s.$ As an alternative, we consider the smoothed approximation below,
\begin{equation}\label{eq:approx}\int_{\Lambda}v(\s)d\s\approx\int_{\Omega}v(\x)\phi_{\Lambda}(\x)d\x,\end{equation}
where 
$$\phi_{\Lambda}:=\frac{1}{\epsilon^{2}}\max\left\{\epsilon-\text{dist}(\x,\Lambda),0\right\}.$$
Here, $\epsilon>0$ is a smoothness parameter that we fix to $\epsilon=0.05$. It is not hard to prove that, for each $v\in\mathcal{C}(\overline{\Omega})$ fixed, the right-hand side of equation \eqref{eq:approx} converges to the left hand-side as $\epsilon\to 0$: \newstuff{for a detailed proof, the reader may refer to Appendix B}. 
Then, our operator of interest becomes
$\operator: \phi_{\Lambda}\to u_{\Lambda},$
and we can proceed with our usual discretization via P1 Finite Elements. To this end, we discretize the domain using a triangular mesh of stepsize $h=0.03$, which results in $N_{h}=7253$ degrees of freedom. Then, we allow $\lambda$ to vary uniformely in [1,10] and we generate a total of 4500 training snapshots accordingly to the probability distributions introduced previously. We exploit these snapshots to train the MINN model below,
$$V_{h}\xrightarrow{\;r=0.1\;} V_{3h}\to V_{3h}\xrightarrow{\;r=0.1\;}V_{h},$$
where the architecture has been defined in analogy to the one employed for the nonlinear PDE in Section \ref{sec:experiments1}.1. The network is trained for a total of 50 epochs and using the same criteria presented in Section \ref{sec:experiments1}.

\subsection{Results}
Once trained, the Mesh-Informed Neural Network reported an average $L^{2}$-error of $4.99\%$, with errors below 10\% for 488 out of 500 test instances. We considered these results satisfactory and we proceeded to sample a total of 100'000 solutions using our DNN model. More precisely, we considered 100 equally spaced values of $\lambda$ in [1,10], and for each of those we sampled 1'000 independent solutions. From there, we obtained an i.i.d. sample of size 1'000 for each of the $Q_{\lambda_{i}}$, where $\lambda_{i}=\{1+i/11\}_{i=0}^{99}$. Results are in Figure 8.
\begin{figure}\label{fig:lowoxy}
\includegraphics[width=\textwidth]{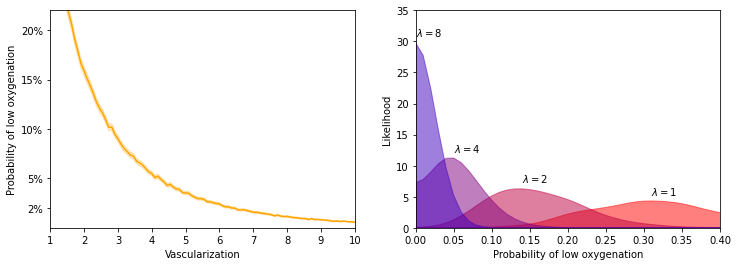}
\caption{Results for the UQ problem in Section \ref{sec:uq}. Left panel: expected probability of low oxygenation, $\mathbb{E}[Q_{\lambda}]$, as a function of the vascularization level $\lambda$. Confidence bands are computed pointwise using a 99\% confidence level. Right panel: probability distribution of $Q_{\lambda}$ for different values of $\lambda$. Colors fade from red to purple as $\lambda$ grows.}\end{figure}
\newline\newline
The left panel of Figure 8 shows the approximation of the map $\lambda\to\mathbb{E}[Q_{\lambda}]$. As the tightness of the 99\% confidence bands suggests, the estimate is rather robust. Spurious oscillations are most likely due to the numerical errors introduced by the MINN model, rather than from statistical noise. Coherently with the physical interpretation of $\lambda$, we see that the probability of low oxygenation decreases with the vascular density. Interestingly, although the total intensity of the source term is normalized to the same level in any configuration, the networks with higher gaps between neighboring edges are prone to spots of low oxygen concentration. Not only, the decay appears to be exponential. Further investigations seem to confirm this intuition, as we obtain an $R^{2}$-coefficient of 0.987 when trying to relate $\lambda$ and $\log\mathbb{E}[Q_{\lambda}]$ via linear regression. 
Conversely, the right panel of Figure 8 shows how the probability distribution of $Q_{\lambda}$ changes according to $\lambda$. The densities are more spread out when $\lambda$ is near 1, while they shrink towards zero as $\lambda$ increases. This is coherent with the physical intuition, and we would expect the density of $Q_{\lambda}$ to converge to a Dirac delta as $\lambda\to+\infty$.

In real scenarios where the physical complexity of a vascularized tissue is appropriately described as in \cite{possenti}, this analysis would be computationally viable only with the employment of the MINN model as a surrogate for the numerical solver. In the case presented here, both the full order model and the surrogate model are computationally inexpensive. However, the former required around 2 minutes to generate 1'000 PDE solutions. Conversely, the trained DNN model was able to provide the same number of solutions in as little as 3 milliseconds, corresponding to a speed up factor of approximately 40. For multiphysics models where a simulation of a single point in the parameter space could cost hours of wall computational time, such gain could enable approaches that would be otherwise unreasonable. Even in the present simplified setting, such a boost also makes up for the computational effort required to train the network. In fact: (i) collecting the training snapshots took 575.96 seconds, (ii) training the MINN model required 125.32 seconds, (iii) generating the 100'000 new solutions took 0.3 seconds. In contrast, the numerical solver would generate at most $\approx$ 5'500 solutions within the same amount of time. These considerations support the interest in further developing model order reduction techniques based on deep neural networks that are robust for general spatial domains, such as MINNs. In fact, we are currently developing model reduction techniques applied to realistic models of the vascular microenvironment that leverage on the DL-ROM framework, previously developed in \cite{franco,fresca,fresca2}, combined with the efficiency of MINNs.

\section{Conclusions}
\label{sec:conclusions}
In this paper, we have introduced Mesh-Informed Neural Networks (MINNs), a novel class of sparse DNN models that can be used to learn general operators between infinite dimensional spaces. The approach is based on an apriori pruning strategy that is obtained by embedding the hidden states into discrete functional spaces of different fidelities. Despite being very easy to implement, MINNs show remarkable advantages with respect to dense architectures, such as a massive reduction in the computational costs and an increased ability to generalize over unseen samples. This is coherent with the results available in the pruning literature \cite{pruning}, even though the setting differs from the one considered thereby.

We have tested MINNs over a large variety of scenarios, going from low dimensional manifolds to parameter dependent PDEs, showing that these architectures can \newstuff{be a competitive alternative for learning} nonlinear operators \newstuff{in the presence of complex spatial domains}. This opens a wide new range of research directions that we wish to investigate further in future works. For instance, one could test the use of MINNs in more sophisticated Deep Learning based Reduced Order Models for PDEs (DL-ROMs), such as those in \cite{franco,fresca,fresca2,lee}. \newstuff{In addition}, considering how MINNs are actually built, it \newstuff{would} be \newstuff{also} interesting to see whether one can take advantage of multi-fidelity strategies during the training phase, as in \cite{mengwu}. \newstuff{Finally, another intriguing research question is whether one can characterize the approximation properties of such architectures, similarly to what other researchers have already done for FNOs and DeepONets, see e.g. \cite{lanthaler2} and \cite{lanthaler}, respectively.}

\section*{Statements and Declarations}
\noindent\textbf{Funding}\\
This project has received funding under the ERA-NET ERA PerMed / FRRB grant agreement No ERAPERMED2018-244, RADprecise - Personalized radiotherapy: incorporating cellular response to irradiation in personalized treatment planning to minimize radiation toxicity.
\\\\
\newstuff{\textbf{Acknowledgments}\\
The authors would like to thank Prof. Jan S. Hesthaven, EPFL (École Polytechnique Fédérale de Lausanne), for his precious insights about the geodesic version of mesh-informed architectures.}
\\\\
\textbf{Data availability}\\
Enquiries about data availability should be directed to the authors.

\newpage
\appendix

\newstuff{\section*{Appendix A: Supplementary material for Section \ref{sec:experiments4}}}
\begin{figure}[H]
    \centering
    \begin{minipage}{0.49\textwidth}
    \includegraphics[width=\textwidth]{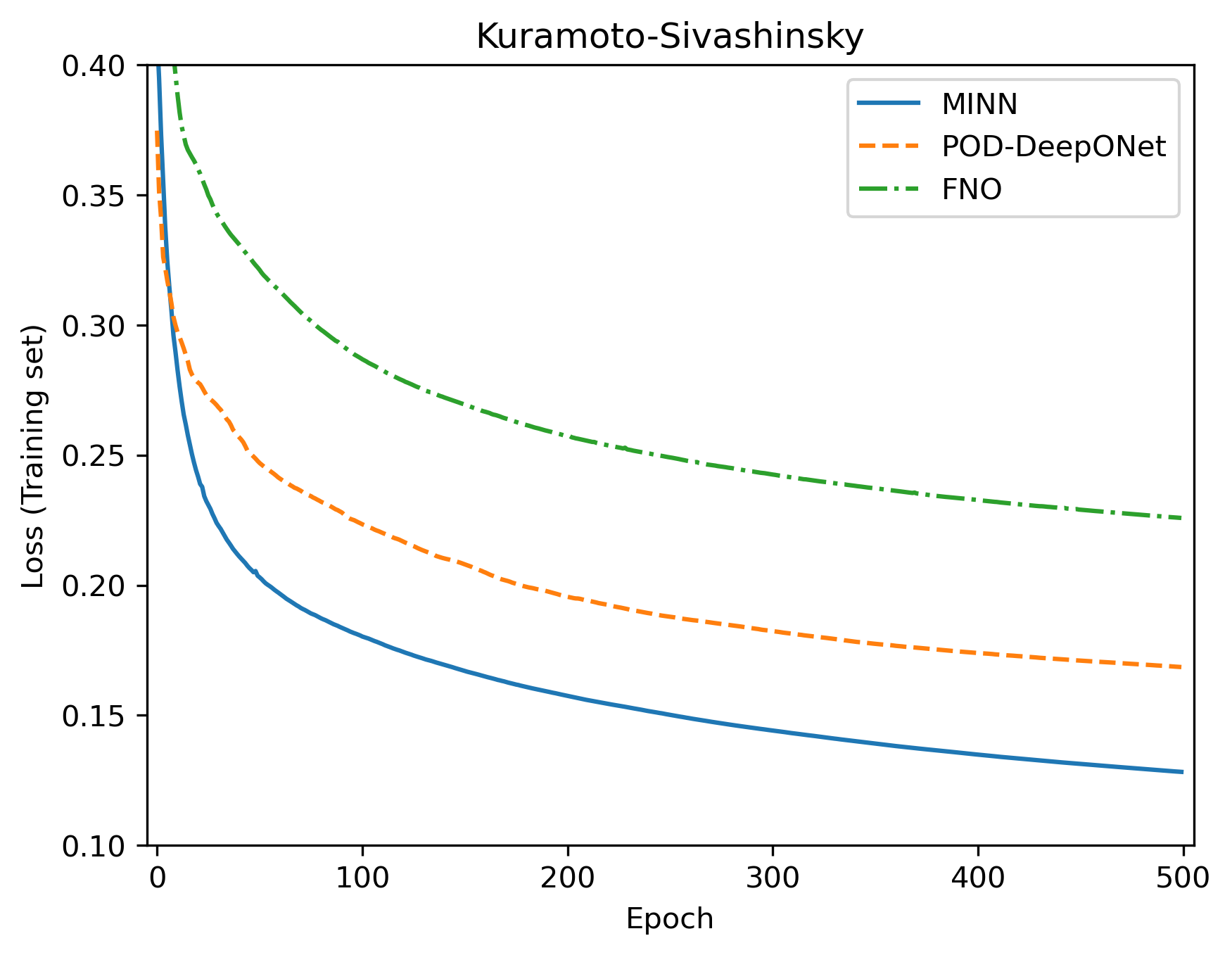}
    \end{minipage}
    \begin{minipage}{0.49\textwidth}
    \includegraphics[width=\textwidth]{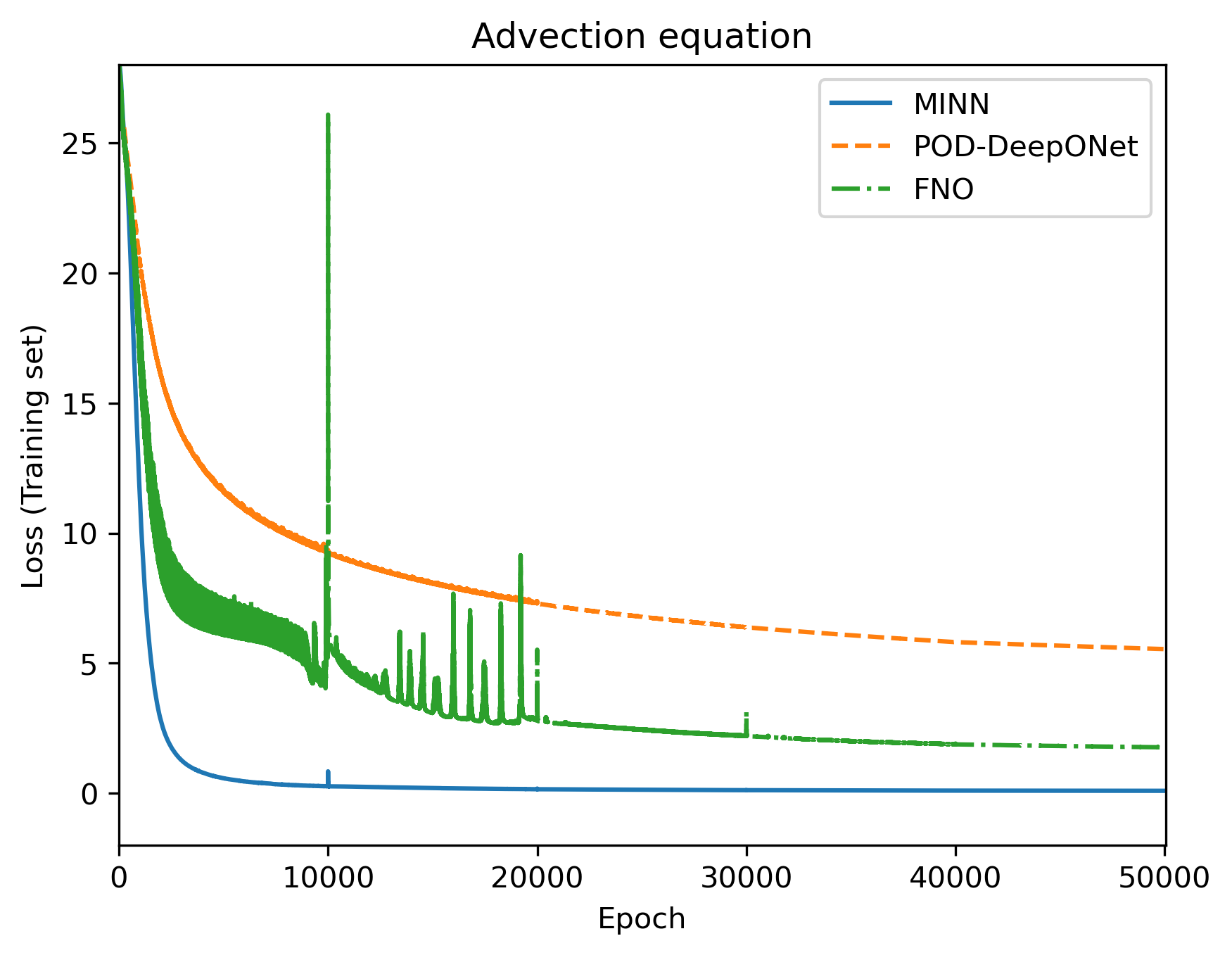}
    \end{minipage}
    \caption{\newstuff{Loss optimization for MINN, POD-DeepONet and FNO, for the two case studies reported in Sections \ref{subsec:kuramoto} and \ref{subsec:kolmogorov}. For the advection equation, only the training of the best POD-DeepONet is reported (50 basis).}}
    \label{fig:losses}
\end{figure}
\renewcommand{\arraystretch}{1.5}
\begin{table}[H]
    \centering
    \begin{tabular}{l | l l l}
    \hline
         \bf Case study & \bf MINN & \bf POD-DeepONet & \bf FNO \\
         \hline\hline
         Kuramoto-Sivashinsky §\ref{subsec:kuramoto} & 12m 7s & 3m 12s & 1h 33m 17s\\
         Advection equation §\ref{subsec:kolmogorov} & 5m 6s & 4m 7s & 6h 56m 29s\\\hline
    \end{tabular}
    \caption{\newstuff{Training times for MINN, POD-DeepONet and FNO, for the two case studies reported in Sections \ref{subsec:kuramoto} and \ref{subsec:kolmogorov}. For the advection equation, only the training time of the best POD-DeepONet is reported (50 basis).}}
    \label{tab:times}
\end{table}

\section*{Appendix B: Auxiliary result for Section \ref{sec:uq}}

\begin{lemma}
Let $\Omega$ be a Lipschitz domain. Let $\Lambda\subset\overline{\Omega}$ be the union of finitely many segments, where each segment intersects $\partial\Omega$ in at most two points (the extremes). For any fixed $v\in\mathcal{C}(\overline{\Omega})$ one has
$$\frac{1}{\epsilon^{2}}\int_{\Omega}v(\x)\max\left\{\epsilon-\textnormal{dist}(\x,\Lambda),0\right\}d\x\to \int_{\Lambda}v(\s)d\s\quad\text{as }\epsilon\downarrow0^{+}.$$
\end{lemma}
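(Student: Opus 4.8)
The plan is to localize the computation to a single segment and reduce everything to a one-dimensional integral in the direction normal to that segment, where the normalization $1/\epsilon^{2}$ turns out to be exactly the right one. First I would decompose $\Lambda=\bigcup_{k=1}^{m}\Lambda_{k}$ into its finitely many constituent segments and write $\dist(\x,\Lambda)=\min_{k}\dist(\x,\Lambda_{k})$. Since the integrand $\max\{\epsilon-\dist(\x,\Lambda),0\}/\epsilon^{2}$ is supported in the $\epsilon$-tube $T_{\epsilon}:=\{\x:\dist(\x,\Lambda)<\epsilon\}$ and is bounded there by $1/\epsilon$, and since the places where two distinct segments are simultaneously within $\epsilon$ (junctions and crossings) form finitely many regions of area $O(\epsilon^{2})$, the contribution of those overlaps is $O(\epsilon)$. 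Hence it suffices to prove the claim for a single segment, the general case following by summation.

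For a single segment $\Lambda=\{\gamma(t):=p+t\,\mathbf{e}\;:\;t\in[0,\ell]\}$ parametrized by arc length, with unit tangent $\mathbf{e}$ and unit normal $\mathbf{e}^{\perp}$, I would use the change of variables $\x=\gamma(t)+n\,\mathbf{e}^{\perp}$, whose Jacobian is $1$ because the segment is straight. Away from the two endpoints the nearest point of $\Lambda$ to such an $\x$ is the perpendicular foot $\gamma(t)$, so that $\dist(\x,\Lambda)=|n|$; the exceptional endpoint caps, where the closest point is an extreme of the segment, have area $O(\epsilon^{2})$ and again contribute $O(\epsilon)$. On the rectangular part the inner normal integral is computed exactly,
$$\frac{1}{\epsilon^{2}}\int_{-\epsilon}^{\epsilon}\bigl(\epsilon-|n|\bigr)\,dn=\frac{1}{\epsilon^{2}}\cdot\epsilon^{2}=1,$$
which is precisely what makes the $1/\epsilon^{2}$ scaling reproduce the line integral in the limit.

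It then remains to replace $v(\gamma(t)+n\,\mathbf{e}^{\perp})$ by $v(\gamma(t))$, which is legitimate by the uniform continuity of $v$ on the compact set $\overline{\Omega}$: setting $\eta(\epsilon):=\sup\{|v(\x)-v(\y)|:|\x-\y|\le\epsilon\}\to0$, the resulting error is bounded by $\eta(\epsilon)\,\ell$, while the main term is $\int_{0}^{\ell}v(\gamma(t))\,dt=\int_{\Lambda}v\,d\s$. The last technical point is that, when an endpoint of $\Lambda$ lies on $\partial\Omega$, part of the normal slice is cut off by the boundary, so the identity $\dist(\x,\Lambda)=|n|$ over the full slice $(-\epsilon,\epsilon)$ holds only on an interior subsegment. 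I would handle this with a standard double-limit argument: fixing $\delta>0$ and restricting to $t\in[\delta,\ell-\delta]$, whose image lies at positive distance $c_{\delta}$ from $\partial\Omega$, the exact computation applies for every $\epsilon<c_{\delta}$ and yields $\int_{\delta}^{\ell-\delta}v(\gamma(t))\,dt$; the two $\delta$-collars near the endpoints contribute at most $C\,\delta\,\|v\|_{\infty}$ uniformly in $\epsilon$ (again because the normal weight integrates to $\epsilon^{2}$ per unit length), and $\int_{\delta}^{\ell-\delta}v(\gamma)\,dt\to\int_{0}^{\ell}v(\gamma)\,dt$ as $\delta\to0$. Taking $\limsup_{\epsilon\to0}$ of the error, bounding it by $C\delta$, and letting $\delta\to0$ closes the argument.

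The main obstacle is not the limit itself but the geometric bookkeeping: the endpoint caps, the boundary truncation, and the junction overlaps must all be shown to be $O(\epsilon)$ or $O(\delta)$, so that only the clean normal integral survives. Once these negligible regions are correctly isolated, the heart of the proof is the elementary identity that the tent profile of height $\epsilon$ and half-width $\epsilon$ has mass exactly $\epsilon^{2}$.
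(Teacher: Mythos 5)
Your proof is correct and follows essentially the same route as the paper's: reduce to a single segment by arguing that the junction/overlap regions have area $O(\epsilon^{2})$ and hence contribute $O(\epsilon)$ (the paper organizes this via the sets $\hat{L}_{i}$ of points closest to each segment), then compute the tube integral in normal coordinates, where the tent profile carries mass exactly $\epsilon^{2}$ per unit length and the endpoint caps are negligible. The one genuine difference is the case of a segment endpoint lying on $\partial\Omega$: the paper dismisses it with a one-line appeal to the Lipschitz regularity of the boundary, whereas your $\delta$-collar double-limit argument treats it explicitly and, in fact, needs no boundary regularity at all, so on that point your write-up is more complete than the paper's.
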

\begin{proof}
Let $\varphi_{\Lambda}^{\epsilon}$ be the (unscaled) kernel $$\varphi_{\Lambda}^{\epsilon}(\x):=\max\{\epsilon-\text{dist}(\x,\Lambda),0\}.$$ 
By definition, we note that $\varphi_{\Lambda}^{\epsilon}$ vanishes outside of the set $\Lambda+B(0,\epsilon):=\{\x+\epsilon\mathbf{v}\;|\;\x\in \Lambda,\;\mathbf{v}\in B(0,1)\}$. We now proceed in three steps.
\\\\
\textbf{Step 1.} We start by proving that the lemma holds whenever $\Lambda$ is composed by a single segment. Without loss of generality, we let $\Lambda = [0,1]\times\{0\}$. For the sake of simplicity, we further assume that $\Lambda\cap\partial\Omega=\emptyset$. The case in which $\Lambda$ has an extreme on the boundary can be handled similarly by exploiting the Lipschitz regularity of $\partial\Omega$. Let $\epsilon<\text{dist}(\Lambda,\partial\Omega)$, so that $\Lambda+B(0,\epsilon)\subset\Omega$. By direct computation we have
\begin{multline*}
\int_{\Omega}v(\x)\varphi_{\Lambda}^{\epsilon}(\x)d\x=\frac{1}{\epsilon^{2}}\int_{\Lambda+B(0,\epsilon)}v(\x)\varphi_{\Lambda}^{\epsilon}(\x)d\x=\\\\=\frac{1}{\epsilon^{2}}\int_{A_{\epsilon}\cup B_{\epsilon}}v(\x)\varphi_{\Lambda}^{\epsilon}(\x)d\x+\frac{1}{\epsilon^{2}}\int_{[0,1]\times[-\epsilon,\epsilon]}v(\x)\varphi_{\Lambda}^{\epsilon}(\x)d\x    
\end{multline*}
\noindent where $A_{\epsilon}$ and $B_{\epsilon}$ are two half circles of radius $\epsilon$ respectively centered at the extremes of the segment $\Lambda$. It is easy to see that the first contribute vanishes as $\epsilon\downarrow0^{+}$. In fact, since $||\varphi_{\Lambda}^{\epsilon}||_{\infty}=\epsilon$,
$$\left|\frac{1}{\epsilon^{2}}\int_{A_{\epsilon}\cup B_{\epsilon}}v(\x)\varphi_{\Lambda}^{\epsilon}(\x)d\x\right|\le\frac{1}{\epsilon^{2}}||v||_{\infty}\cdot\epsilon|A_{\epsilon}\cup B_{\epsilon}|=\pi\epsilon||v||_{\infty}.$$
Conversely, for the second term we have
$$\int_{0}^{1}\frac{1}{\epsilon^{2}}\int_{-\epsilon}^{\epsilon}v(x_{1},x_{2})\varphi_{\Lambda}^{\epsilon}(x_{1},x_{2})dx_{2}dx_{1}=\int_{0}^{1}\frac{1}{\epsilon^{2}}\int_{-\epsilon}^{\epsilon}v(x_{1},x_{2})(\epsilon-|x_{2}|)dx_{2}dx_{1}=$$
$$=\int_{0}^{1}\frac{1}{\epsilon^{2}}\int_{-1}^{1}v(x_{1},\epsilon z)(\epsilon-\epsilon|z|)\epsilon dzdx_{1}=\int_{0}^{1}\int_{-1}^{1}v(x_{1},\epsilon z)(1-|z|)dzdx_{1}.$$
By letting $\epsilon\downarrow0^{+}$ we then get 
$$\int_{0}^{1}\int_{-1}^{1}v(x_{1},0)(1-|z|)dzdx_{1}=\left(\int_{\Lambda}v(\s)d\s\right)\left(\int_{-1}^{1}(1-|z|)dz\right)=\int_{\Lambda}v(\s)d\s.$$

\noindent\\\\\textbf{Step 2.} Let $\Lambda=L_{1}\cup\dots\cup L_{n}$ be given by the union of $n$ segments. For any $i=1,\dots,n$, let $\hat{L}_{i}:=\{\x\in\Omega\;|\;\text{dist}(\x,L_{i})<\text{dist}(\x,\Lambda\setminus L_{i})\}$. We prove the following auxiliary result,
$$|(\Omega\setminus \hat{L}_{i})\cap(L_{i}+B(0,\epsilon))|=o(\epsilon^{2}).$$
To see this, we note that, upto sets of measure zero,
$$(\Omega\setminus \hat{L}_{i})\cap(L_{i}+B(0,\epsilon)) = (\hat{L}_{1}\cup\dots\hat{L}_{i-1}\cup\hat{L}_{i+1}\cup\dots \hat{L}_{n})\cap(L_{i}+B(0,\epsilon)).$$
It is then sufficient to prove that $|\hat{L_{j}}\cap(L_{i}+B(0,\epsilon))|=o(\epsilon^{2})$ for all $j$ independently. If $L_{i}\cap L_{j}=\emptyset$, the proof is trivial. Conversely, if the two segments intersect, let $\theta$ be the angle between the two lines. It is easy to see that the intersection $\hat{L_{j}}\cap(L_{i}+B(0,\epsilon))$ is contained in a triangle of height $\epsilon$ and width $\epsilon/\tan(\theta/2)+\epsilon\tan(\theta/2).$ The conclusion follows.
\\\\
\textbf{Step 3.} Let $\Lambda=L_{1}\cup\dots\cup L_{n}$ and define the regions $\hat{L}_{1},\dots\hat{L}_{n}$ as in the previous step. Fix any $v\in\mathcal{C}(\Omega)$. Then
$$\frac{1}{\epsilon^{2}}\int_{\Omega}v(\x)\varphi_{\Lambda}^{\epsilon}(\x)d\x = \frac{1}{\epsilon^{2}} \sum_{i=1}^{n}\int_{\hat{L}_{i}}v(\x)\varphi_{\Lambda}^{\epsilon}(\x)d\x=\sum_{i=1}^{n}\frac{1}{\epsilon^{2}}\int_{\hat{L}_{i}}v(\x)\varphi_{L_{i}}^{\epsilon}(\x)d\x.$$
Therefore, we can prove the original claim by showing that $\frac{1}{\epsilon^{2}}\int_{\hat{L}_{i}}v(\x)\varphi_{L_{i}}^{\epsilon}(\x)d\x\to\int_{L_{i}}v(\s)d\s$ for each $i$. At this purpose, fix any $i=1,\dots,n.$ We have
\begin{multline*}
\left|\frac{1}{\epsilon^{2}}\int_{\Omega}v(\x)\varphi_{L_{i}}^{\epsilon}(\x)d\x-\frac{1}{\epsilon^{2}}\int_{\hat{L}_{i}}v(\x)\varphi_{L_{i}}^{\epsilon}(\x)d\x\right|\le\frac{1}{\epsilon^{2}}\int_{\Omega\setminus \hat{L}_{i}}|v(\x)||\varphi_{L_{i}}^{\epsilon}(\x)|d\x\le\\\\
\le \frac{1}{\epsilon^{2}}||v||_{\infty}\cdot\epsilon|(\Omega\setminus \hat{L}_{i})\cap(L_{i}+B(0,\epsilon))| = o(\epsilon)
\end{multline*}
\noindent and the conclusion follows from Step 1.\qed
\end{proof}


\end{document}